\documentclass{l4dc2024}


\title[Multi-Agent Coverage Control with Transient Behavior Consideration]{Multi-Agent Coverage Control with Transient Behavior Consideration}
\usepackage{times}
\usepackage[utf8]{inputenc}
\usepackage{natbib}
\usepackage{graphicx}
 \usepackage[top=2cm, bottom=2cm, left=2.3cm, right=2.3cm]{geometry}
 \usepackage{algorithm}
  \usepackage{algorithmic}
 \usepackage{amsfonts}
 \usepackage{amsmath}

 \usepackage{amssymb}
 \usepackage{float}
 \usepackage{caption}
 \usepackage{multirow}
 \usepackage{wrapfig}
 \DeclareMathOperator*{\argmax}{argmax}
 
 \captionsetup{font={scriptsize}}
 \usepackage{cite} 
\newenvironment{talign}
 {\align}
 {\endalign}
\newenvironment{talign*}
{\csname align*\endcsname}
 {\endalign}

\newtheorem{rmk}{Remark}
\newtheorem{defi}{Definition}

\newcommand{\bE}{\mathbb{E}}
\newcommand{\GP}{\mathcal{GP}}
\newcommand{\UCB}{\textup{UCB}}
\newcommand{\Oracle}{\texttt{Oracle}}
\newcommand{\lmax}{\lambda_{\max}}
\newcommand{\tr}{\textup{trace}}
\newcommand{\wk}{\widetilde{\kappa}}
\newcommand{\wK}{\widetilde{K}}
\newcommand{\wX}{\widetilde{X}}
\newcommand{\wY}{\widetilde{Y}}
\newcommand{\wDelta}{\widetilde{\Delta}}
\newcommand{\wLambda}{\widetilde{\Lambda}}
\newcommand{\blkdiag}{\textup{blkdiag}}
\newcommand{\K}{K}
\newcommand{\eval}{{\textup{eval}}}
\newcommand{\bR}{\mathbb{R}}

\usepackage[normalem]{ulem}
\usepackage{xcolor}
\allowdisplaybreaks



\author{%
 \Name{Runyu Zhang}\thanks{ The first two authors contributed equally to this work.}  \Email{runyuzhang@fas.harvard.edu}
 \AND
 \Name{Haitong Ma}\footnotemark[1]  \Email{haitongma@g.harvard.edu}
 \AND
 \Name{Na Li} \Email{nali@seas.harvard.edu}\\
 \addr School of Engineering and Applied Science, Harvard University%
 \thanks{This work was funded by NSF AI institute: 2112085, NSF CNS: 2003111, NSF ECCS: 2328241.}
}

\begin{document}

\maketitle
\vspace{-15pt}
\begin{abstract}%
This paper studies the multi-agent coverage control (MAC) problem where agents must dynamically learn an unknown density function while performing coverage tasks. Unlike many current theoretical frameworks that concentrate solely on the regret occurring at specific targeted sensory locations, our approach additionally considers the regret caused by transient behavior – the path from one location and another. We propose the multi-agent coverage control with the doubling trick (MAC-DT) algorithm and demonstrate that it achieves (approximated) regret of $\widetilde O(\sqrt{T})$ even when accounting for the transient behavior. Our result is also supported by numerical experiments, showcasing that the proposed algorithm manages to match or even outperform the baseline algorithms in simulation environments. We also show how our algorithm can be modified to handle safety constraints and further implement the algorithm on a real-robotic testbed. 
 \end{abstract}

\begin{keywords}%
  Multi-agent coverage control, Gaussian Processes, Bayesian optimization, no-regret learning %
\end{keywords}


\section{Introduction}
In multi-agent coverage control (MAC) problems, there is a group of agents collectively tasked with efficiently exploring and covering an environment typically characterized by certain density functions.
MAC problems find a lot of applications such as sensor networks \citep{Krause06}, search and rescue \citep{Wasim}, underwater exploration \citep{karapetyan2018multi} and habitat monitoring \citep{mainwaring2002wireless} etc. In the extensive studies of the MAC problem, classical approaches to coverage control \citep{cortes2004coverage,cortes2005coordination,cortes2005spatially,lekien2009nonuniform,hussein2007effective,bullo2012gossip,durham2011discrete} generally assume
a priori knowledge of the density function and employ Lloyd’s algorithm \citep{lloyd1982least} to guarantee the convergence of agents to a local minimum of the coverage cost. 

Recent investigations have expanded this paradigm to accommodate scenarios where the density function is unknown. In such cases, agents must simultaneously conduct coverage tasks and learn the density function dynamically. A prevalent approach involves modeling the density function as a Gaussian process (GP) and employing non-parametric learning through sensor measurements. In order to achieve good performance, agents need to balance exploration and exploitation, collecting informative samples to learn the density function (exploration) while concurrently converging to optimal coverage locations (exploitation). In particular, a specific subset of existing research focuses on the development of practical, adaptive, and distributed algorithms for coverage control utilizing the GP model \citep{luo2018adaptive, luo2019distributed, choi2008swarm, kemna2017multi, nakamura2022decentralized}. Despite their practical relevance, these endeavors lack a rigorous theoretical analysis for performance guarantees. An alternative strand of research, represented by \citep{carron2015multi, todescato2017multi}, offers asymptotic guarantees of convergence to near-local optimal solutions, yet the convergence rate is not studied. Recent contributions \citep{prajapat2022near, benevento2020multi, santos2021multi} have sought to address this void by proposing algorithms with convergence rate guarantees, particularly in the context of regret. It is noteworthy, however, that most papers only consider regret accumulated at the targeted sensory locations that agents travel between while neglecting regrets along the path where agents travel from one location to the next location. 
For applications that involve a physical moving sensor that can not move to a new point instantaneously or quickly, regret along the paths may be substantial. This observation motivates us to design an algorithm for the MAC problem with rigorous regret guarantees that explicitly account for the transient behavior. 



\paragraph{Our contribution:} 
In this paper, we study the MAC problem under an unknown density function. We present a novel algorithm, multi-agent coverage control with doubling trick (MAC-DT), demonstrating an $\widetilde O(\sqrt{T})$ regret, even when accounting for transient behavior. {The algorithm leverages the Upper Confidence Bound (UCB) technique, wherein, during each episode, it computes the UCB of the reward map and assigns a specific sensory location to each agent. Subsequently, each agent plans its path toward the designated location. The termination condition for each episode is determined by the `doubling trick' (detailed explanation in Algorithm \ref{alg:main})}. Our research is closely aligned with the works of \citep{prajapat2022near} and \citep{wei2021multi}. Our proposed algorithm bears resemblance to the MACOPT algorithm introduced by \citep{prajapat2022near}, employing the UCB of the Gaussian Process to navigate the trade-off between exploration and exploitation. However, \citet{prajapat2022near} do not consider transient behavior, while our algorithm incorporates this aspect by carefully designing episodes using the doubling trick. Though \citet{wei2021multi} addresses regret in the presence of transient behavior, due to differences in problem settings and algorithmic design, they achieve a slightly inferior regret rate of $\widetilde O(T^{2/3})$. Further, the regret defined in \citep{wei2021multi} is with respect to a local-optimal solution, where our (approximated-) regret is defined by the global-optimal solution. Our results are also supported by numerical studies, suggesting that the MAC-DT algorithm can match or even outperform the aforementioned baseline algorithms. Further, our algorithm can naturally be combined with safety considerations and operate in settings with obstacles or safety constraints. Lastly, we also validate the algorithm on a physical robotic testbed with three quadrotors covering an area. 

Due to space limit, we defer some of the auxiliary proofs and numerical details into the online version of the paper \citep{supp}.

\vspace{-5pt}
\paragraph{Other related works} In the setting where the density function is known, beyond Lloyd's algorithm-based approaches, there are alternative methods leveraging submodularity \citep{krause2011submodularity, nemhauser1978analysis,feige1998threshold} to address the MAC problem \citep{ramaswamy2016sensor,sun2017submodularity}. In scenarios where the density function is unknown, various papers \citep{schwager2009decentralized, schwager2015robust} explore parametric estimation as an alternative to GP modeling. These algorithms model the function as a linear combination of basis functions, aiming to learn the weights associated with each basis function. 
Furthermore, alternative strategies \citep{davison2014nonuniform, choi2010learning} take a distinctive route by not involving the identification of the unknown probability density function. Instead, they solely rely on random samples from the environment to determine the agents' coverage locations. Apart from traditional algorithmic approaches to coverage control, recent studies have also delved into the application of reinforcement learning techniques \citep{faryadi2021reinforcement, DIN2022deep, battocletti2021rl}, 
which incorporates the power of learning and adaptation into the field.

\vspace{-15pt}
\section{Problem Setup and Preliminaries } 
\vspace{-5pt}
\subsection{Multi-agent Coverage Control}
 \vspace{-5pt}

Consider the MAC problem on a connected directed graph $G = \{V,E\}$, where $V$ are the vertices of the graph and $E$ are the edges. The density function/reward map is given by $w: V\to \mathbb{R}^+$, where each vertex $v\in V$ is associated with a reward $w(v)\ge 0$. For notational simplicity we also use $w(S)$ to denote the entrywise function evaluation on the grid subset $S \subseteq V$. We use the notation $E(v) \subseteq E$ to denote the set of edges whose source vertex is $v$. It is also assumed that the graph is connected and with diameter $D$.  There are $N$ agents/robots located on the vertices of the graph and at each time step $t$ agent $i$ chooses an action $a_{i,t}$ from the edge set $ E(v_{i,t})$, where $v_{i,t}$ is agent $i$'s location at time $t$, and then transit to the end vertex of $a_{i,t}$. We assume that when located at vertex $v$, agent $i$ can cover a certain surrounding area of its current location, which is denoted as $s_i(v) \subseteq V$. For example, $s_i(v)$ can be just the vertex $v$ that agent $i$ is at, or a $\kappa$-hop neighborhood area of $v$. Also note that agents may have different covering ability, that is, two different agent $i,j$ may have different $s_i(v)$ and $s_j(v)$ even at the same location. For simplicity, we denote $s_{i,t} := s_i(v_{i,t})$. We also use $s_t := \cup_{i=1}^N s_{i,t}$ to denote the total area covered by all agents. It is assumed that at every location $v$ each agent's covering area is smaller than size $\K$, i.e. $|s_i(v)|\le \K$ for all $v\in V$. At each time step, each agent could select one vertex $v_{i,t}^\eval\in s_{i,t}$ and observe a noisy reward value which is a random variable $Y(v_{i,t}^\eval):= w(v_{i,t}^\eval) + \epsilon$, where $\epsilon\sim\mathcal{N}(0,\sigma^2)$ is some Gaussian noise. We also denote $s_t^\eval := \cup_{i=1}^N v_{i,t}^\eval$. For vertex $v$, $n_v(t)$ denotes the number of times that vertex $v$ has been sampled (i.e. chosen as the evaluation point) until time $t$, i.e. $n_v(t) = \sum_{\tau=1}^t \mathbf{1}\{v\in s_{\tau}^\eval\}$.

At time $t$, the total covered reward is given by $\|w(s_t)\|_1:=\sum_{v\in s_t}w(v)$, and the objective is to cover as much reward as possible. The optimal coverage value is denoted as $s^\star \!\!:=\! \argmax_{s = \cup_{i\!=\!1}^N \!s_{i}\!(v_i), v_i\!\in \!V} \!\|w(s)\|_1$.  For an algorithm $\texttt{Alg}$ that plans the path $v_{i,t}(\texttt{Alg})$ of the agents, we define the regret of the algorithm as
\begin{align}\label{eq:regret}
\vspace{-10pt}
   \textstyle R(\texttt{Alg}, T, w):= T\|w(s^\star)\|_1 -  \sum_{t=1}^T \|w(s_t(\texttt{Alg}))\|_1.
\vspace{-10pt}
\end{align}
We also define the $\alpha$-approximated regret as
\begin{align}\label{eq:approximated-regret}
\vspace{-10pt}
  \textstyle  R^\alpha(\texttt{Alg}, T, w):= \alpha T\|w(s^\star)\|_1 -  \sum_{t=1}^T \|w(s_t(\texttt{Alg}))\|_1.
\vspace{-10pt}
\end{align}
Compared with the regret definition in \citep{prajapat2022near} which considers regret accumulated only at the targeted sensory locations, we account for the regret associated with the path from one target location to the next.  In many energy-constrained exploration tasks, the regret during the transient phase is important and should not be neglected. For instance, Mars exploration rovers aim to maximize coverage within a limited life-cycle traveling distance; agricultural spray drones must optimize coverage in unsprayed areas within constrained flight time. In both cases, regret during the transient phase demands careful consideration in the algorithmic design.


\vspace{-5pt}
\subsection{Gaussian Processes}
\vspace{-5pt}

Some assumptions on the reward map $w$ are 
required to guarantee no-regret. Here we assume that the reward distribution $w$ is sampled from a Gaussian Process (GP) $\GP(\mu, \kappa)$,\footnote{Our assumption $w \ge 0$ does not conflict with the Gaussian Process (GP) assumption. This is because we have the flexibility to shift the function sampled from the GP, ensuring non-negativity while preserving the same regret.} which is specified by a mean function $\mu: V\to \mathbb{R}$ and a covariance function $\kappa:V\times V \to \mathbb{R}$. It represents a collection of dependent random variables, one for each $v\in V$, every finite subset $s\subseteq V$ of which is multivariate
Gaussian distribution with mean $\mu(s)$ and variance $\kappa(s,s)$. Here for subsets $s,s'\subseteq V$, the term $\kappa(s,s')$ represents a matrix of size $|s|\times|s'|$, where the $ij$-th entry $[\kappa(s,s')]_{i,j} = \kappa(s_i,s'_j)$ ($s_i$ is the $i$-th entry of $s$). For noisy samples $Y(s)= w(s) + \epsilon$ at vertices $s = \{v_1, v_2, \dots,v_k\}$ with i.i.d. Gaussian noice $\epsilon\sim \mathcal{N}(0,\sigma^2I)$ (without causing notational confusion we also represent $Y(s)$ as a $|s|$ dimensional vector where $[Y(s)]_i = Y(s_i)$), the posterior distribution over the reward map $w$ is a GP again, i.e. $w | \left\{s,Y(s)\right\}\sim \GP(\mu',\kappa')$, with mean $\mu'$ and covariance $\kappa'$ given by:
\begin{equation}\label{eq:GP-update}
\begin{split}
\vspace{-10pt}
    \mu'(v) = \mu(v)+\kappa(v,s) (\kappa(s,s) + \sigma^2 I)^{-1}Y(s),\quad 
    \kappa'(u,v) =\kappa(u,v) - \kappa(u, s) (\kappa(s,s) + \sigma^2 I)^{-1} \kappa(s,v)
\vspace{-10pt}
\end{split}
\end{equation}
A more comprehensive discussion of topics related to GP can be found in \citep{williams2006gaussian}.

\vspace{-10pt}
\section{Algorithm Design}
We present a detailed description of the multi-agent coverage control with the doubling trick (MAC-DT, Algorithm \ref{alg:main}), which can be decomposed into the following steps:
\paragraph{Upper Confidence Bound Construction.} Similar to the standard UCB algorithm for Bayesian optimization with GP (c.f. \citep{srinivas2009gaussian}), we keep track of a posterior estimate of the GP and use it to construct a UCB for the reward map $w$ at each episode. At the start of each episode $e$, the mean function $\mu^{(e-1)}$ and the covariance $\kappa^{(e-1)}$ is updated based on the posterior estimation, and the UCB for the reward map is calculated by $w_{\UCB}^{(e)} := \mu^{({e-1})} + \beta^{(e)} \sigma^{({e-1})}$, where $\sigma^{(e-1)}$ is the standard deviation function, i.e. $\sigma^{(e-1)}(v) := \sqrt{\kappa^{(e-1)}(v,v)}$ and $\beta^{(e)}$ is some pre-specified constant.

\paragraph{Destination Selection with $\Oracle$} In each episode $e$, given the UCB reward map $w_{\UCB}^{(e)}$, the algorithm computes the destination of each agent using an oracle algorithm $\Oracle$ which takes a given reward map as its input and outputs the destination of each agent $\{v_{1,dest}, v_{2,dest}, \dots,v_{N,dest}\} = \Oracle(w_{\UCB}^{(e)})$. The most ideal $\Oracle$ would solve the optimal max-coverage problem of the given reward map $w$, i.e. $\Oracle(w) =\argmax_{s = \cup_{i=1}^N s_{i}(v_i), v_i\in V} \|w(s)\|_1$. However, the maximum coverage problem can be NP-hard \citep{feige1998threshold}. Consequently, practical solutions frequently resort to approximation algorithms, such as greedy algorithms, to provide an approximate solution. For this consideration, we introduce the $\alpha$-approximated oracle:
\begin{defi}[$\alpha$-approximated oracle]
    An oracle algorithm $\Oracle$ is called an \textit{$\alpha$-approximated oracle} if for every reward map $w$, the output of the oracle $\{v_{1,dest}, v_{2,dest}, \dots,v_{N,dest}\} = \Oracle(w)$ satisfies:
    \begin{equation*}
       \textstyle  \|w(s_{dest})\|_1\ge \alpha \|w(s^\star)\|_1, \textup{ where   } s_{dest} = \cup_{i=1}^N s_i(v_{i,dest}),~~ s^\star =  \argmax_{s = \cup_{i=1}^N s_{i}(v_i), v_i\in V}\|w(s)\|_1 
    \end{equation*}
\end{defi}
\begin{example}[The greedy oracle] It can be shown that the greedy algorithm oracle is a $(1\!-\!\frac{1}{e})$-approximated oracle \citep{nemhauser1978analysis,krause2014submodular,prajapat2022near}. The $\Oracle$ is defined as
\vspace{-5pt}
\begin{talign*}
        v_{1,dest} = \argmax_{v} w(s_1(v)),\quad  v_{i,dest} = \argmax_{v} w(s_i(v)\backslash \cup_{j=1}^{i-1} s_j(v_{j,dest})), i\ge 2,
    \end{talign*}
    where $A \backslash B$ denotes the relative complement of $B$ in $A$.
\end{example}

\paragraph{Path Planning and Sample Selection} Given the destination output by the $\Oracle$, each agent plans its path on the graph to reach its destination. Along the path, at each time step $t$ agents selects its evaluation point $v_{i,t}^\eval$ such that it is the most uncertain point within its covering region $s_{i,t}$, i.e. $v_{i,t}^\eval = \argmax_{v\in s_{i,t}} \sigma^{(e-1)}(v)$.

\paragraph{The Doubling Trick}
As stated in Line 6 of the algorithm, the termination condition is given by the `doubling trick', i.e.,  at least one of the vertex's samples doubles. If an agent reaches its destination before the episode terminates, it stays at the destination and keeps collecting samples until the end of the episode. If the termination condition is triggered before the agent reaches its target, it promptly terminates its current path and transitions to the next episode.
It is worth noting that the doubling trick serves as a major difference between our algorithm and MACOPT in \citep{prajapat2022near}, which plays an important role in proving sublinear regret considering transient behavior. Similar techniques have also been used for regret analysis in different settings \citep{zhang2023multi,auer2008near}.  Numerical simulations also suggest that the doubling trick can improve and stabilize the coverage performance, especially in the case where the observation is noisy and high reward locations are relatively scattered.
\begin{rmk}
It's essential to note that MAC-DT maintains a partial decentralization approach. Specifically, it requires a centralized coordinator responsible for storing and broadcasting critical global information, such as the UCB of the reward map $w^{(e)}_\UCB$ and $n_v(t)$'s, to each individual agent.
\end{rmk}



\vspace{-10pt}
\begin{algorithm}
\caption{Multi-agent Coverage Control with the Doubling Trick (MAC-DT)}\label{alg:main}
    \begin{algorithmic}[1]
    \REQUIRE An Oracle algorithm $\Oracle$ that calculates agents' destination given a reward map. We also denote $\mu^{(0)} = \mu, \kappa^{(0)} = \kappa, \sigma^{(0)}(v) = \sqrt{\kappa(v,v)}$. We use $t_e$ to represent the total timestep when episode $e$ starts. 
    \FOR{episode $e = 1,2,...$}
        \STATE Calculate the upper confidence bound $w_{\UCB}^{(e)} = \mu^{({e-1})} + \beta^{(e)} \sigma^{({e-1})}$
        \STATE Set the destination using the oracle $\{v_{1,dest}, v_{2,dest}, \dots,v_{N,dest}\} = \Oracle(w_{\UCB}^{(e)})$. \label{algline:oracle}
        \STATE Path Planning: Compute the shortest path on the graph $G$ from current location $v_{i,t_{e}}$ to $v_{i,dest}$. \label{algline: pathplanning}
        \STATE Collect samples: Agents follow their planned paths and at time step $t$, each agent $i$ sets the evaluation point as $v_{i,t}^\eval = \argmax_{v\in s_{i,t}} \sigma^{(e-1)}(v)$, and collect sample $Y(v_{i,t}^\eval)$. When an agent reaches its destination, it stays at the destination and keeps collecting samples until the episode terminates.
        \STATE Episode termination criteria \!-\! the doubling trick: episode terminates at a minimum $t$ such that there exists $v\in V$, such that $n_v(t) \ge \max\{2n_v(t_e-1),1\}$, i.e. when at least one of the vertex's samples doubles. \label{algline:doubling_trick}
        \STATE Update the mean value $\mu^{(e)}$ and $\kappa^{(e)}$ covariance of GP according to \eqref{eq:GP-update} such that $ w\big|D \sim \GP(\mu^{(e)}, \kappa^{(e)}),$ where $D = \{s_{t_e}^\eval,Y(s_{t_e}^\eval),s_{t_e - 1
        }^\eval, Y(s_{t_e - 1
        }^\eval),\dots, s_1^\eval,Y(s_1^\eval)\}$. Set $\sigma^{(e)}(v) = \sqrt{\kappa^{(e)}(v,v)}$
    \ENDFOR
    \end{algorithmic}
\end{algorithm}
\vspace{-10pt}

\section{Main Result}
We first define the following variable which takes the same definition as in \citep{srinivas2009gaussian}:
\vspace{-5pt}
\begin{equation}\label{eq:gamma-def}
   \textstyle \gamma_N:= \max_{s\subseteq V, |s| = N} I(Y(s);w),
    \vspace{-5pt}
\end{equation}
where $I(X;Y)$ denotes the mutual information (c.f. \citep{mackay2003information}) between random variables $X$ and $Y$. This quantity is a frequently employed term in the context of proving regret bounds for Bayesian type algorithms based on GPs (e.g. \citep{srinivas2009gaussian,prajapat2022near}).
Now we state the main theorem:
\begin{theorem}\label{theorem:regret}
   For Algorithm \ref{alg:main} with $\Oracle$ as an $\alpha$-approximated oracle, by setting $\beta^{(e)} \!=\!\! \sqrt{2\log\!\left(|V|\pi^2 e^2/6\delta\right)}$, with probability at least $1-\delta$, the $\alpha$-approximated regret of Algorithm \ref{alg:main} can be bounded by
   \begin{talign*}
       R^\alpha(\texttt{Alg}, T, w)&\le \underbrace{8\sigma \sqrt{\gamma_{2NT}K|V| T\log\left(2|V|\pi^2 T^2/3\delta\right)}}_{\textup{Part I}} \\& \hspace{-30pt}\underbrace{+ nD\K|V|\max_v(\mu(v) +\beta^{(1)} \sqrt{\kappa(v,v)})(\log T + 2)+ 2|V|\sqrt{2NK\log\left(2|V|\pi^2 T^2/3\delta\right)\max_v\kappa(v,v)}}_{\textup{Part II}} 
   \end{talign*}
   where the term $\gamma_{2NT}$ is defined as in \eqref{eq:gamma-def}.
\end{theorem}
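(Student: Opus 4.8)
The plan is to run an optimism-based GP-UCB analysis but to account separately for the stationary and transient portions of each episode, letting the doubling trick bridge the two. First I would establish the ``good event'' that the constructed bounds are valid upper confidence bounds: for $\beta^{(e)} = \sqrt{2\log(|V|\pi^2 e^2/6\delta)}$, a Gaussian tail bound at each vertex together with a union bound over all $v\in V$ and all episodes $e$ (the $\pi^2 e^2/6$ factor is exactly what makes $\sum_e 1/e^2$ converge) yields $w(v)\le w_{\UCB}^{(e)}(v)$ simultaneously for all $v$ and $e$ with probability at least $1-\delta$. On this event optimism is immediate: since $w_{\UCB}^{(e)}$ dominates $w$ pointwise and $\Oracle$ is an $\alpha$-approximated oracle applied to $w_{\UCB}^{(e)}$, we get $\|w_{\UCB}^{(e)}(s_{dest})\|_1 \ge \alpha\|w_{\UCB}^{(e)}(s^\star)\|_1 \ge \alpha\|w(s^\star)\|_1$, which converts the benchmark $\alpha\|w(s^\star)\|_1$ into something controllable by confidence widths.

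Next I would write the total approximated regret as a sum over episodes and, within each episode, split it into a \emph{transient} part (steps before an agent reaches its destination) and a \emph{stationary} part (steps at the destination). The doubling trick bounds the episode count: every episode ends when some vertex's count doubles, and each vertex can double at most $O(\log T)$ times before $n_v\le T$, so the number of episodes is $O(|V|\log T)$ — this is the $(\log T+2)$ factor in Part II. For the transient contribution, each shortest path lasts at most $D$ steps and the instantaneous regret there is crudely at most $\|w(s^\star)\|_1$; using the good event, $\|w(s^\star)\|_1\le \sum_{v\in s^\star} w_{\UCB}^{(1)}(v) \le N\K\max_v(\mu(v)+\beta^{(1)}\sqrt{\kappa(v,v)})$ since $s^\star$ covers at most $N\K$ vertices. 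Multiplying the per-step bound by $D$ and by the $O(|V|\log T)$ episodes gives the first term of Part II.

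The stationary contribution is where Part I comes from and is the technical heart. For a stationary step in episode $e$, optimism gives instantaneous regret $\le \|w_{\UCB}^{(e)}(s_{dest})\|_1 - \|w(s_{dest})\|_1 \le \sum_{v\in s_{dest}} 2\beta^{(e)}\sigma^{(e-1)}(v)$; since each agent evaluates the most uncertain point of its covering set and $|s_i(v_{i,dest})|\le \K$, this is at most $2\beta^{(e)}\K\sum_{i=1}^N \sigma^{(e-1)}(v_{i,dest}^\eval)$. The obstacle is that the posterior is frozen for the whole episode, so the variances do not shrink as sampling continues, and naively multiplying by the episode length destroys the information-gain structure. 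The doubling trick resolves this: because no vertex more than doubles its count within an episode, the frozen variance $\sigma^{(e-1)}(v)$ stays within a factor $\sqrt2$ of the ``fresh'' variance repeated sampling would have produced (the single-point identity $\tilde\sigma_n^2/\tilde\sigma_{2n}^2\le 2$, propagated through the monotonicity of GP posterior variances). Replacing each frozen variance by its fresh counterpart turns the length-weighted sum into $\sqrt2\sum_{\textup{samples}}\tilde\sigma(\cdot)$ over the (at most) $2NT$ evaluations, and then Cauchy--Schwarz with the standard bound $\sum\tilde\sigma^2\le \frac{2}{\log(1+\sigma^{-2})}\gamma_{2NT}$ produces the $\sqrt{\gamma_{2NT}\,\K\,|V|\,T\log(\cdot)}$ form of Part I, with $\beta^{(e)}$ bounded by its value at the final episode supplying the $\sqrt{\log(2|V|\pi^2T^2/3\delta)}$ factor. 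The leftover from each vertex's very first sample, where ``doubling'' from $0$ to $1$ escapes the $\sqrt2$ argument and which occurs at most $|V|$ times, accumulates into the residual third term of Part II.

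The main obstacle is thus the stationary bound: justifying that freezing the posterior for an entire episode costs only a constant factor, which hinges on the doubling property of GP posterior variances and on propagating it correctly through the multi-agent coverage structure (the max-uncertainty evaluation rule and the $\K$-bounded covering sets). Getting the bookkeeping of fresh-versus-stale variances right and matching it cleanly to $\gamma_{2NT}$ is where the real work lies; the confidence/optimism argument, the episode-counting, and the transient bound are comparatively routine.
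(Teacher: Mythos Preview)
Your high-level plan matches the paper's: the same confidence event (Lemma~\ref{lemma:clean-event-probability}), essentially the same two-part decomposition (the paper calls the pieces ``Destination Switch'' and ``Price of Optimism,'' running the variance term over \emph{all} steps rather than only stationary ones, but this is cosmetic), the same $O(|V|\log T)$ episode count via doubling (Lemma~\ref{lemma:bound-episode-number}), and the same crude transient bound producing the first term of Part~II.

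The gap is in the mechanism for the optimism term, which the paper handles quite differently. Your route --- frozen $\sigma^{(e-1)}(v)\le\sqrt2\,\tilde\sigma_{\text{fresh}}(v)$ via ``single-point identity plus monotonicity,'' then the standard Srinivas bound $\sum\tilde\sigma^2\le C\gamma$ --- does not close as stated. The single-point identity is a diagonal-kernel fact, and monotonicity of posterior variances gives only $\tilde\sigma\le\sigma^{(e-1)}$, the wrong direction. Even if one grants a per-time-step comparison $\sigma^{(e-1)}(v)^2\le2\sigma_{t-1}(v)^2$ (this \emph{is} provable via a precision-matrix argument using the doubling trick), bounding $\sum_t\|\sigma_{t-1}(s_t^\eval)\|_2^2$ by $\gamma$ still requires an eigenvalue bound on each within-time-step $N$-point batch covariance, and nothing stops several agents from evaluating the same low-count vertex, making that eigenvalue $\Theta(N)$ and reintroducing a $\sqrt N$ loss. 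The paper avoids fresh variances entirely: it proves (Lemma~\ref{lemma:bound-Ke}) that the full per-\emph{episode} posterior covariance matrix $K^{(e-1)}_{s^\eval_{t_e:t_{e+1}-1}}$ --- rows indexed by all evaluations in the episode, with repetition --- satisfies $\lambda_{\max}\le\sigma^2$, by dominating $\kappa$ with a diagonal kernel $\tilde\kappa$ and transferring the ordering to the posteriors via Schur-complement monotonicity (Lemma~\ref{lemma:schur-complement-ineq}); on the diagonal side the doubling trick gives $\lambda_{\max}=\max_v c_v^{(e)}\tilde\kappa^{(e-1)}(v,v)\le\sigma^2$ directly. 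This eigenvalue bound makes $\log(1+x)\ge(\log2)x$ applicable to every eigenvalue, so $\log\det(I+\sigma^{-2}K^{(e-1)}_{s^\eval})\ge(\log2)\sigma^{-2}\sum_t\|\sigma^{(e-1)}(s_t^\eval)\|_2^2$, and the left side sums over episodes to $2I(Y;w)\le2\gamma$ by the chain rule (Lemma~\ref{lemma:quatify-information-gain}). Aggregating over the whole episode, so that the doubling trick controls each vertex's total multiplicity, is precisely what absorbs the multi-agent batching and removes the stray $\sqrt N$ (Remark~\ref{rmk:bound-Ke}); that is the idea your proposal is missing.
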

\begin{rmk}
The regret consists of a term (Part II) that scales with $O(\log(T))$ and a term that scales with $\widetilde O(\gamma_{2NT}K|V|T)$ (Part I). We mainly focus our discussion on Part I. Note that the term $\gamma_{2NT}$ captures the largest possible mutual information between the samples with size $2NT$ and the true reward map. This quantity generally grows sublinearly with $T$ for commonly used kernels \citep{srinivas2009gaussian}, e.g. for the squared-exponential kernel in the 2-dimension case, $\gamma_{2NT}\sim O((\log(2NT))^3)$, which leads to a final regret of order $\widetilde O(\sqrt{T})$. We would also like to compare our result with the regret bound in \citep{prajapat2022near}, where they bound the regret without accounting for the transition behavior as $\widetilde O(\gamma_{NT}NK|V|T)$. Note that our Part I managed to remove the dependency on $\sqrt{N}$. This major difference arises from a more careful analysis on the bound of the maximum eigenvalue of the covariance matrix (see Remark \ref{rmk:bound-Ke} in the Appendix). It is worth noting that the application of our derived bound to the analysis in \citep{prajapat2022near} has the potential to enhance their regret bound by eliminating the dependency on $\sqrt{N}$ as well.     
\end{rmk}
\subsection{Proof Sketches}
This section provides a brief proof sketch for Theorem \ref{theorem:regret}, which can be decomposed into three major steps. The first step, regret decomposition, breaks down the regret into two terms, namely the `destination switch' and the `price of optimism'. Then the second and third steps bound these two terms respectively.
\paragraph{Regret Decomposition}\label{sec:regret-decomposition} We define the clean event to be $\mu^{({e-1})} - \beta^{(e)} \sigma^{({e-1})}\le w\le = \mu^{({e-1})} + \beta^{(e)} \sigma^{({e-1})} = w_{\UCB}^{(e)}$ (for all $v\in V$), i.e., the true reward map lies within the confidence bound created by $\mu^{({e-1})} \pm \beta^{(e)}\sigma^{({e-1})}$. By carefully selecting the parameters $\beta^{(e)}$, it can be demonstrated that the algorithm will consistently fall within the clean event with a high probability. Thus, for the proof sketch, we will focus solely on the clean event. For simplicity, we also first consider $T$ where $T$ is the last time step of episode $E$, i.e., $T = t_{E+1} -1$.
\begin{talign}
   &\quad \textstyle R^\alpha(\texttt{Alg}, T, w)  =\alpha T\|w(s^\star)\|_1 - \bE \sum_{t=1}^T \|w(s_t))\|_1= \bE \sum_{e=1}^E\sum_{t=t_e}^{t=t_{e+1}-1}[\alpha\|w(s^\star)\|_1- \|w(s_t))\|_1] \notag \\
   &= \textstyle\bE \sum_{e=1}^E\sum_{t=t_e}^{t=t_{e+1}-1}[\alpha\|w(s^\star)\|_1- \|w_{\UCB}^{(e)}(s_t)\|_1] + \bE \sum_{e=1}^E\sum_{t=t_e}^{t=t_{e+1}-1}[\|w_{\UCB}^{(e)}(s_t)\|_1 - \|w(s_t)\|_1]\notag \\
   &\le\textstyle \!\bE \!\sum_{e\!=\!1}^E\sum_{t=t_e}^{t\!=\!t_{e\!+\!1}\!-\!1}\![\alpha\|w(s^\star)\|_1\!\!-\! \|w_{\UCB}^{(e)}(s_t)\|_1] \!+\! \bE\! \sum_{e=1}^E\!\sum_{t=t_e}^{t=t_{e\!+\!1}\!-\!1}[  \|w_{\UCB}^{(e)}(s_t)\|_1\! \!-\!\|[\mu^{({e\!-\!1})} \!-\! \beta^{(e)} \sigma^{({e\!-\!1})}](s_t)\|_1]\notag \\
   &= \underbrace{\textstyle\bE \sum_{e=1}^E\sum_{t=t_e}^{t=t_{e+1}-1}[\alpha\|w(s^\star)\|_1- \|w_{\UCB}^{(e)}(s_t)\|_1]}_{\textup{Destination Switch}} + \underbrace{\textstyle\bE \sum_{e=1}^E2\beta^{(e)}\sum_{t=t_e}^{t=t_{e+1}-1}\|\sigma^{({e-1})}(s_t)\|_1 }_{\textup{Price of Optimism}}\label{eq:regret-decomposition}.
\end{talign}
Here the term `destination switch' measures the regret from visiting sub-optimal nodes during transit to the destination in each episode. The term  `price of optimism' captures the regret from using the UCB as a surrogate for the actual rewards. We now bound each term separately.

\paragraph{Bound the destination switch}\label{sec:destination-switch}
The bound for the destination switch is relatively easy and straight forward:

\begin{lemma}\label{lemma:destination-switch}
   $\!\!\!
        \underbrace{ \textstyle \bE\! \sum_{e\!=\!1}^E\!\!\sum_{t=t_e}^{t=t_{e\!+\!1}\!-\!1}\![\alpha\|w(s^\star)\|_1\!\!-\! \|w_{\UCB}^{(e)}(s_t)\|_1]}_{\textup{Destination Switch}}\!\le\! \alpha nD\K|V|\!\max_v(\mu(v) \!\!+\!\!\beta^{(1)} \!\!\sqrt{\kappa(v,v)})(\log(t_{E\!+\!1}\!-\!1)\!+\!1)\!.$
\begin{proof}
When agents haven't reached the destinations, $\alpha\|w(s^\star)\|_1- \|w_{\UCB}^{(e)}(s_t)\|_1 \le \alpha\|w(s^\star)\|_1\le N\K \max_v w(v)$. Since $w(v)\!\le\! \mu^{(0)}(v) \!+\! \beta^{(1)}\sigma^{(0)}(v)\!\le\! \max_v(\mu(v) \!+\!\beta^{(1)} \sqrt{\kappa(v,v)}) \Rightarrow \alpha\|w(s^\star)\|_1\!- \|w_{\UCB}^{(e)}(s_t)\|_1 \!\le\! NK\max_v(\mu(v) +\beta^{(1)} \sqrt{\kappa(v,v)}) $. When the agents have reached the destination, we have that $s_t = s_{dest} = \Oracle(w_{\UCB}^{(e)})$, and thus $\|w_{\UCB}^{(e)}(s_t)\|_1 \ge \alpha \max_{s}\|w_{\UCB}^{(e)}(s)\|_1 \ge \alpha \|w(s^\star)\|_1 $. Since the diameter of graph $G$ is $D$, it takes the agents at least $D$ steps to reach the destination, thus for each episode, we have that $\sum_{t=t_e}^{t=t_{e+1}-1}[\alpha\|w(s^\star)\|_1- \|w_{\UCB}^{(e)}(s_t)\|_1] \le nD\K\max_v(\mu(v) +\beta^{(1)} \sqrt{\kappa(v,v)})$. Then from Lemma \ref{lemma:bound-episode-number} in Appendix \ref{apdx:auxiliaries} in \citep{supp}, the number of episode $E$ can be bounded by  $E \le |V|\log(t_{E+1}-1) + 1$, the destination switch can then be bounded by ~$\bE \alpha nD\K E \le\alpha nD\K|V|\max_v(\mu(v) +\beta^{(1)} \sqrt{\kappa(v,v)})(\log(t_{E+1}-1)+1)$.
\end{proof}
\end{lemma}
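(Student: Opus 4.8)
The plan is to exploit the structure of each episode: the only time steps that can incur positive instantaneous regret $\alpha\|w(s^\star)\|_1 - \|w_{\UCB}^{(e)}(s_t)\|_1$ are those in which some agent is still \emph{in transit} toward its oracle-assigned destination, whereas once every agent has arrived the oracle's approximation guarantee forces the summand to be nonpositive. So I would first separate these two phases, then bound the transit contribution per episode, and finally multiply by a bound on the total number of episodes $E$.

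For the arrived phase I would work under the clean event, so that $w \le w_{\UCB}^{(e)}$ pointwise. When $s_t = s_{dest} = \Oracle(w_{\UCB}^{(e)})$, the $\alpha$-approximation property gives $\|w_{\UCB}^{(e)}(s_t)\|_1 \ge \alpha\max_s\|w_{\UCB}^{(e)}(s)\|_1 \ge \alpha\|w_{\UCB}^{(e)}(s^\star)\|_1 \ge \alpha\|w(s^\star)\|_1$, so the summand is at most zero and can be discarded. For the transit phase I would drop the nonnegative $\|w_{\UCB}^{(e)}(s_t)\|_1$ term and bound the summand crudely by $\alpha\|w(s^\star)\|_1 \le \|w(s^\star)\|_1 \le N\K\max_v w(v)$, since $s^\star$ is a union of $N$ covering sets each of size at most $\K$; the clean event at $e=1$ then yields $\max_v w(v) \le \max_v(\mu(v)+\beta^{(1)}\sqrt{\kappa(v,v)})$.

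Because the path-planning step follows a shortest path in a graph of diameter $D$, every agent reaches its destination within $D$ steps, so at most $D$ transit steps occur per episode. This gives a per-episode bound of $ND\K\max_v(\mu(v)+\beta^{(1)}\sqrt{\kappa(v,v)})$, and summing over episodes reduces the whole quantity to $E$ times this factor.

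The genuinely substantive step, which I expect to be the main obstacle, is bounding the number of episodes $E$. This is exactly where the doubling-trick termination rule is needed: an episode ends only when some vertex's sample count first doubles relative to its value at the episode's start. A single vertex can be responsible for such a doubling at most $O(\log T)$ times, since its count begins at $1$, at most doubles on each triggering event, and satisfies $n_v(t)\le t$; summing over the $|V|$ vertices then gives $E = O(|V|\log T)$, which is precisely the estimate $E \le |V|\log(t_{E+1}-1)+1$ I would invoke from Lemma \ref{lemma:bound-episode-number}. Combining this episode count with the per-episode bound yields the claimed inequality.
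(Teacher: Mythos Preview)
Your proposal is correct and follows essentially the same approach as the paper: split each episode into a transit phase (bounded crudely by $N\K\max_v w(v)$ via the clean event at $e=1$, with at most $D$ such steps because of the shortest-path planning on a diameter-$D$ graph) and an arrived phase (nonpositive by the $\alpha$-approximation oracle combined with $w\le w_{\UCB}^{(e)}$), then multiply by the episode bound $E\le |V|\log(t_{E+1}-1)+1$ from Lemma~\ref{lemma:bound-episode-number}. Your sketch of the episode-count argument via the doubling trick is also the same one the paper gives in the appendix.
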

\vspace{-20pt}
\paragraph{Bound the price of optimism}\label{sec:price-of-optimism}
The bound for price of optimism is technically more involved, thus we defer the full proof to Appendix \ref{apdx:bound-price-of-optimism}. The key step is to bound $\|\sigma^{(e-1)}(s_t^\eval)\|_2^2$ using the mutual information of the function evaluations and the true reward map, i.e., $I(Y(s^\eval_{1:t_{E+1}-1});w) $ as stated in the following lemma.
\begin{lemma}(Informal, formal statement see Lemma \ref{lemma:information-gain-to-regret-formal})\label{lemma:information-gain-to-regret}  ~$\sum_{e\!=\!1}^E \!\sum_{t=t_e}^{t_{e\!+\!1}\!-\!1}\! \|\sigma^{({e\!-\!1})}(s^\eval_t)\|_2^2 \!\le\! \frac{2\sigma^{2}}{\log2} I(Y(s^\eval_{1:t_{E\!+\!1}\!-\!1});w)$.
\end{lemma}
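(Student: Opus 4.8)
The plan is to bound the left-hand side episode by episode and to tie each episode's contribution to the mutual information \emph{gained during that episode}, then assemble the pieces with the chain rule for mutual information. Let $B_e$ denote the multiset of evaluation points sampled during episode $e$, and let $K_e := \kappa^{(e-1)}(B_e,B_e)$ be the posterior covariance matrix of the corresponding function values, conditioned on all data collected before episode $e$. Since the diagonal of $K_e$ consists exactly of the values $\sigma^{(e-1)}(v)^2$ for $v\in B_e$, the per-episode contribution to the left-hand side is precisely $\sum_{t=t_e}^{t_{e+1}-1}\|\sigma^{(e-1)}(s_t^\eval)\|_2^2 = \tr(K_e)$. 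On the information side, the Gaussian chain rule gives $I(Y(s^\eval_{1:t_{E+1}-1});w) = \sum_{e=1}^E I(Y(B_e);w\mid \text{data before }e) = \sum_{e=1}^E \tfrac12\log\det(I+\sigma^{-2}K_e)$. It therefore suffices to prove the per-episode matrix inequality $\tr(K_e)\le \tfrac{\sigma^2}{\log 2}\log\det(I+\sigma^{-2}K_e)$ and sum over $e$.

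To pass from a scalar estimate to this matrix inequality, I would diagonalize $M_e := \sigma^{-2}K_e$ and use that $\lambda \le \tfrac{1}{\log 2}\log(1+\lambda)$ for every $\lambda\in[0,1]$ — this is just concavity of $\lambda\mapsto\log(1+\lambda)$, whose chord on $[0,1]$ is $\lambda\log 2$. Summing over the eigenvalues of $M_e$ yields $\tr(M_e)\le\tfrac{1}{\log2}\log\det(I+M_e)$, which after multiplying by $\sigma^2$ is exactly the per-episode target, and produces the stated constant $\tfrac{2\sigma^2}{\log 2}$. The catch is that the scalar step is valid only when every eigenvalue of $M_e$ lies in $[0,1]$, i.e.\ when $\lambda_{\max}(K_e)\le\sigma^2$.

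Establishing $\lambda_{\max}(K_e)\le\sigma^2$ is the crux and the step I expect to be the main obstacle; it is precisely the sharpened eigenvalue estimate advertised in Remark \ref{rmk:bound-Ke}, and it is where the doubling trick does the real work. Working in the finite-dimensional model $w\sim\mathcal{N}(\mu,\Sigma_0)$ with $\Sigma_0=\kappa(V,V)$, an observation of vertex $v$ reads $e_v^\top w+\epsilon$, so the posterior precision before episode $e$ obeys the identity $\sigma^2\Sigma_{e-1}^{-1} = \sigma^2\Sigma_0^{-1} + \sum_{v}n_v(t_e-1)\,e_v e_v^\top \succeq \sum_v n_v(t_e-1)\,e_ve_v^\top$. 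Writing $K_e = \Pi_e\Sigma_{e-1}\Pi_e^\top$, where the rows of $\Pi_e$ are the indicators $e_v^\top$ of the episode-$e$ samples, we have $\Pi_e^\top \Pi_e = \sum_v m_v\,e_ve_v^\top$ with $m_v$ the within-episode sample count of $v$. The doubling trick guarantees $m_v\le n_v(t_e-1)$ for every vertex sampled before the terminating step, hence $\Pi_e^\top \Pi_e \preceq \sum_v n_v(t_e-1)e_ve_v^\top\preceq\sigma^2\Sigma_{e-1}^{-1}$, and so $\lambda_{\max}(K_e)=\lambda_{\max}(\Sigma_{e-1}^{1/2}\Pi_e^\top \Pi_e\Sigma_{e-1}^{1/2})\le\sigma^2$. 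It is this congruence argument, rather than a crude trace bound, that avoids an extra factor of $N$ (or of $|B_e|$) and underlies the improvement over \citep{prajapat2022near}.

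Two loose ends then remain. First, the bound $m_v\le n_v(t_e-1)$ is genuine only for vertices already sampled before the episode, so the samples taken at the single terminating time step — in particular any first-time-sampled vertex — must be peeled off and controlled separately by a crude episode-wise estimate such as $\|\sigma^{(e-1)}(s_{t_{e+1}-1}^\eval)\|_2^2\le NK\max_v\kappa(v,v)$; summed over the $E=O(|V|\log T)$ episodes (Lemma \ref{lemma:bound-episode-number}) this yields only a lower-order $O(\log T)$ contribution matching Part II of Theorem \ref{theorem:regret}. Second, after the per-episode inequality is secured I would sum over $e=1,\dots,E$ and invoke the chain-rule identity to collapse $\sum_e\tfrac12\log\det(I+\sigma^{-2}K_e)$ into the single mutual information $I(Y(s^\eval_{1:t_{E+1}-1});w)$, giving the claimed factor $\tfrac{2\sigma^2}{\log 2}$. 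The only genuinely hard part is the matrix inequality $\lambda_{\max}(K_e)\le\sigma^2$; once that is in hand the remainder is bookkeeping.
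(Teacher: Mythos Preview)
Your overall architecture matches the paper's exactly: chain-rule decomposition of the mutual information (Lemma \ref{lemma:quatify-information-gain}), the eigenvalue bound $\lambda_{\max}(K_e)\le\sigma^2$ (Lemma \ref{lemma:bound-Ke}), and the scalar-to-matrix passage $\lambda\log 2\le\log(1+\lambda)$ on $[0,1]$ (Lemma \ref{lemma:log-det-to-trace}); you also correctly anticipate the peel-off of the terminating timestep when it contains a first-time-sampled vertex, which is precisely the case split in the formal Lemma \ref{lemma:information-gain-to-regret-formal}.

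Where you genuinely diverge is in how you obtain the eigenvalue bound, and your route is different from the paper's. The paper introduces an auxiliary \emph{diagonal} kernel $\tilde\kappa$ with $\tilde\kappa(u,u)=\sum_v\kappa(u,v)$, proves $K^{(e-1)}\preceq\tilde K^{(e-1)}$ via a general Schur-complement monotonicity lemma (Lemmas \ref{lemma:wK-ge-K} and \ref{lemma:schur-complement-ineq}), and then computes $\lambda_{\max}$ explicitly for the diagonal posterior (Lemma \ref{lemma:bound-wKe}). Your precision-matrix argument---$\sigma^2\Sigma_{e-1}^{-1}=\sigma^2\Sigma_0^{-1}+\mathrm{diag}(n_v)\succeq\mathrm{diag}(n_v)\succeq\Pi_e^\top\Pi_e$, whence $\Sigma_{e-1}^{1/2}\Pi_e^\top\Pi_e\Sigma_{e-1}^{1/2}\preceq\sigma^2 I$---reaches the same conclusion more directly, bypassing both the auxiliary kernel and the Schur-complement machinery by exploiting the additive structure of Gaussian precision under repeated point observations. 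The paper's route has the minor advantage of not needing $\Sigma_0$ to be invertible (yours needs a limiting argument there); your route, on the other hand, makes immediately transparent why the doubling inequality $m_v\le n_v(t_e-1)$ is exactly the hypothesis required and why no factor of $N$ enters the bound.
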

Then using Cauchy schwartz inequality we can get the bound:
\begin{lemma}\label{lemma:bound-price-of-optimism}
$\!\!\underbrace{\textstyle 
 \bE\sum_{e\!=\!1}^E\!2\beta^{(e)}\!\sum_{t=t_e}^{t=t_{e\!+\!1}\!-\!1}\!\|\sigma^{\!({e\!-\!1})}\!(s_t)\|_1}_{\textup{Price of Optimism}} \!\!\le\!  4\sigma \beta^{(\!E)}\!\!\sqrt{\!K|V|(t_{E\!+\!1}\!-\!1)\!\gamma_{\!N(t_{E\!+\!1}\!-\!1}) } \!+\! 2\beta^{(E)} |V|\sqrt{NK\max_v\!\kappa(v,\!v)}.$

\end{lemma}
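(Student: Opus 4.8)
The plan is to reduce the price-of-optimism sum to the quantity already controlled by Lemma \ref{lemma:information-gain-to-regret} and then apply Cauchy--Schwarz. First I would exploit monotonicity: since $\beta^{(e)}=\sqrt{2\log(|V|\pi^2e^2/6\delta)}$ is nondecreasing in $e$, every factor $\beta^{(e)}$ with $e\le E$ can be replaced by $\beta^{(E)}$ and pulled outside the double sum, so writing $T'=t_{E+1}-1$ it suffices to bound $\sum_{e=1}^E\sum_{t=t_e}^{t_{e+1}-1}\|\sigma^{(e-1)}(s_t)\|_1$.

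The crux is to pass from the $\ell_1$-norm of the standard deviation over the \emph{covered} region $s_t$ to the $\ell_2$-norm over the \emph{evaluation} points $s_t^\eval$, since the latter is exactly what Lemma \ref{lemma:information-gain-to-regret} controls. For this I would use the sampling rule $v_{i,t}^\eval=\argmax_{v\in s_{i,t}}\sigma^{(e-1)}(v)$: for every $v\in s_{i,t}$ we have $\sigma^{(e-1)}(v)\le\sigma^{(e-1)}(v_{i,t}^\eval)$, and since $|s_{i,t}|\le K$ and $s_t=\cup_i s_{i,t}$,
\begin{equation*}
\|\sigma^{(e-1)}(s_t)\|_2^2\le\sum_{i=1}^N\sum_{v\in s_{i,t}}\sigma^{(e-1)}(v)^2\le K\sum_{i=1}^N\sigma^{(e-1)}(v_{i,t}^\eval)^2=K\|\sigma^{(e-1)}(s_t^\eval)\|_2^2.
\end{equation*}
Combining this with the elementary bound $\|\sigma^{(e-1)}(s_t)\|_1\le\sqrt{|s_t|}\,\|\sigma^{(e-1)}(s_t)\|_2\le\sqrt{|V|}\,\|\sigma^{(e-1)}(s_t)\|_2$ yields the per-timestep inequality $\|\sigma^{(e-1)}(s_t)\|_1\le\sqrt{K|V|}\,\|\sigma^{(e-1)}(s_t^\eval)\|_2$. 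I regard this reduction as the main obstacle and the key conceptual step: it is what keeps the final rate free of the extra $\sqrt N$ factor, the complementary ingredient being the careful bound on the maximum eigenvalue of the per-episode covariance matrix, which is carried inside Lemma \ref{lemma:information-gain-to-regret}.

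Next I would apply Cauchy--Schwarz across the $T'$ time steps and invoke Lemma \ref{lemma:information-gain-to-regret} together with the definition of $\gamma_{NT'}$:
\begin{equation*}
\sum_{e,t}\|\sigma^{(e-1)}(s_t)\|_1\le\sqrt{K|V|}\,\sqrt{T'}\,\Big(\sum_{e,t}\|\sigma^{(e-1)}(s_t^\eval)\|_2^2\Big)^{1/2}\le\sqrt{K|V|T'}\,\sqrt{\frac{2\sigma^2}{\log2}\,I(Y(s^\eval_{1:T'});w)},
\end{equation*}
and since the total number of evaluations is at most $NT'$ we have $I(Y(s^\eval_{1:T'});w)\le\gamma_{NT'}$. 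Multiplying by $2\beta^{(E)}$ and using $2\sqrt{2/\log2}\le 4$ produces exactly the leading term $4\sigma\beta^{(E)}\sqrt{K|V|T'\gamma_{NT'}}$.

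Finally, the additive second term $2\beta^{(E)}|V|\sqrt{NK\max_v\kappa(v,v)}$ is a $T$-independent correction that I would isolate \emph{before} the Cauchy--Schwarz step. It collects the contributions that the information-gain amortization cannot absorb, namely the steps at which a vertex first enters a covering region, where the (stale) per-episode variance is still essentially the prior variance and cannot be charged against a previous measurement. Each such contribution can be bounded crudely using $\sigma^{(e-1)}(v)^2\le\max_v\kappa(v,v)$ together with $|s_t|\le NK$, and there are at most $|V|$ of them, which yields the stated term. The point needing care here is to keep this correction \emph{outside} the square root so that it remains additive and does not inflate the leading $\sqrt{T'}$ term.
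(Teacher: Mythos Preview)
Your approach matches the paper's: factor out $\beta^{(E)}$ by monotonicity, reduce $\|\sigma^{(e-1)}(s_t)\|_1$ to $\sqrt{K|V|}\,\|\sigma^{(e-1)}(s_t^{\eval})\|_2$ via Cauchy--Schwarz together with the argmax sampling rule, then invoke Lemma~\ref{lemma:information-gain-to-regret} and bound the mutual information by $\gamma_{N(t_{E+1}-1)}$. The only difference is in the low-order correction: the paper keeps the residual coming from the formal Lemma~\ref{lemma:information-gain-to-regret-formal} (the last step of each episode in which a previously unsampled vertex appears, bounded by $N|V|\max_v\kappa(v,v)$) \emph{inside} the square root after Cauchy--Schwarz and then splits via $\sqrt{a+b}\le\sqrt a+\sqrt b$, whereas you peel it off beforehand; either ordering is fine, but note that your crude estimate using $|s_t|\le NK$ and $\sigma^{(e-1)}(v)^2\le\max_v\kappa(v,v)$ does not literally reproduce the stated form $|V|\sqrt{NK\max_v\kappa(v,v)}$---a bookkeeping slip that does not touch the leading $\widetilde O(\sqrt T)$ term.
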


\vspace{-5pt}
\section{Experimental Results}
This section evaluates our algorithm on multiple numerical simulation tasks and a physical multi-robot coverage task. We also present a variant of Algorithm \ref{alg:main} for the multi-agent coverage with safety considerations.
\vspace{-5pt}
\subsection{Numerical Simulations}
 \begin{wrapfigure}{r}{0.3\textwidth}
 \vspace{-30pt}
\centering
\includegraphics[width=.25\textwidth]{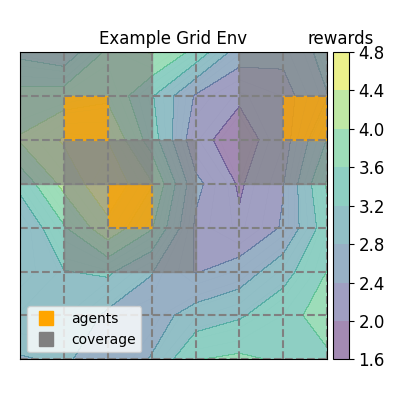}
\vspace{-15pt}
\caption{Grids environment setup.}\label{fig:env}
\vspace{-15pt}
\end{wrapfigure}

\textbf{Environment setup.} We discretize the environments into grids where each agent can cover its 1-hop neighborhood as shown in Figure \ref{fig:env}. We evaluate our algorithm, \texttt{MAC-DT}, with different reward maps, kernel hyperparameters, and observation noises at different scales. Figure \ref{fig:numerical-regret} shows the results with noise variance $0.1$, the first three figures show results with three agents in 8$\times$8 grids with different reward maps, and the last two show 6 and 10 agents in 10$\times$10 grids. The reward maps (listed in titles in Figure \ref{fig:numerical-regret}) are denoted as $w_{\text{Normal}}$, $w_{\text{Uniform}}$, and $w_{\text{Sparse}}$. The first two mean rewards are sampled from Gaussian or uniform distributions, and the last one means only a small number of grids have reward $1$ and others all have reward $0$. We use Gaussian kernels for the GPs. The complete results with 6 and 10 agents, more noise levels, and different kernel hyperparameters can be found in \ref{sec:apdx_exp_texts} in \citet{supp}. 

We compare our algorithm with two baselines. (i) Shortest path planning with MacOpt~\citep{prajapat2022near} named as \texttt{MacOpt-SP}. The only difference between \texttt{MacOpt-SP} and proposed algorithm is the episode termination criteria in line \ref{algline:doubling_trick} in Algorithm \ref{alg:main}. Instead of the doubling trick, \texttt{MacOpt-SP} terminates episodes when all the agents reach their destinations. (ii) A modification of Voronoi partition coverage control from \citet{wei2021multi} named \texttt{Voronoi}. Modifications can be found in Appendix \ref{sec:apdx_exp_texts} in \citep{supp}. Results in Figure \ref{fig:numerical-regret} show that the regret curves of the \texttt{MAC-DT} stay level after a few iterations under all rewards and kernel settings, which means the proposed algorithm quickly finds the no-regret maximal coverage. \texttt{MacOpt-SP} is not efficient since its regret increases faster than MAC-DT, which shows the doubling trick greatly improves the performance. The regret of the Voronoi partition quickly increases in the initial stages since it has an initial sampling stage to reduce the uncertainty globally, which is inefficient when considering the transient behaviors. 

\begin{figure}[ht]
    \centering
    \includegraphics[width=0.19\linewidth]{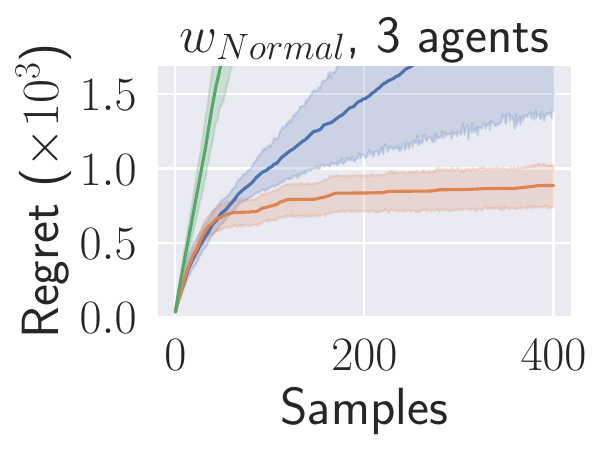}
    \includegraphics[width=0.19\linewidth]{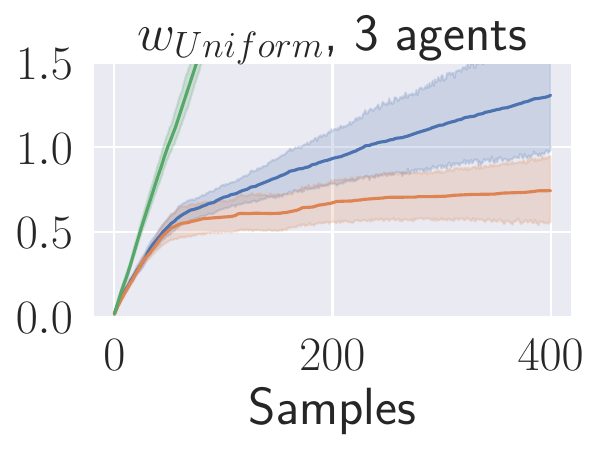}
    \includegraphics[width=0.19\linewidth]{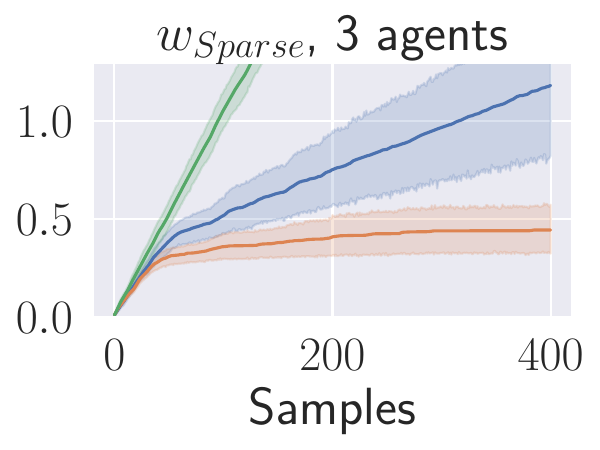}
    \includegraphics[width=0.19\linewidth]{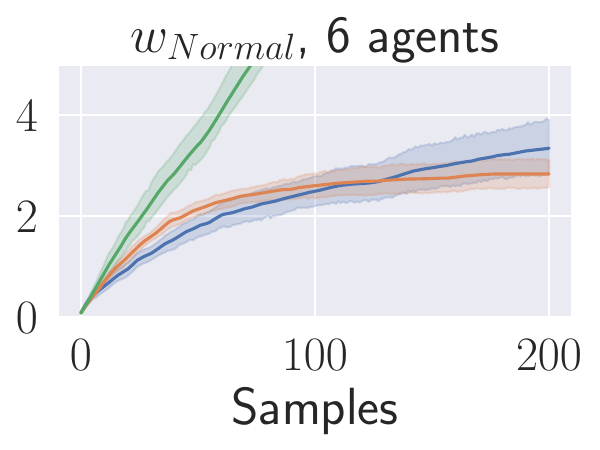}
    \includegraphics[width=0.19\linewidth]{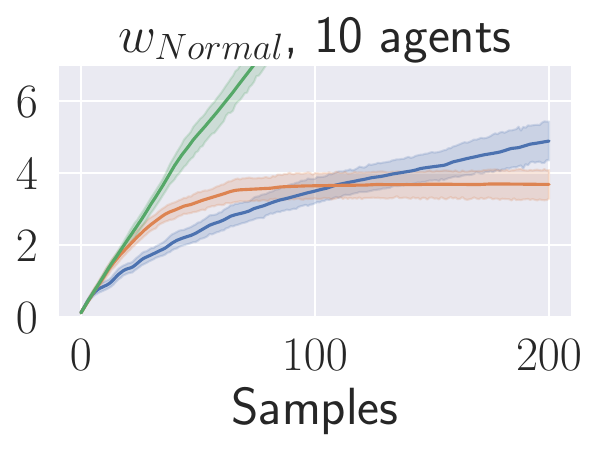}
    \\
    \includegraphics[width=0.6\linewidth]{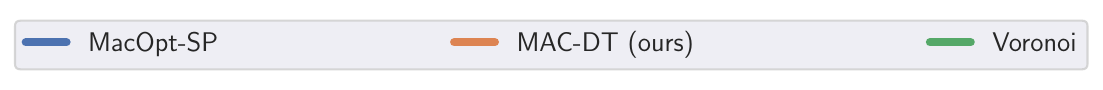}
    \vspace{-10pt}
    \caption{Regret $\textstyle R(\texttt{Alg}, T, w)$ with respect to $T$ with different algorithm $\texttt{Alg}$ rewards $w$. Lines and shaded regions are mean and confidential intervals over 10 randomly generated reward maps.} 
    \label{fig:numerical-regret}
\end{figure}




 \begin{wrapfigure}{r}{0.33\textwidth}
 \vspace{-50pt}
\centering
\includegraphics[width=.25\textwidth]{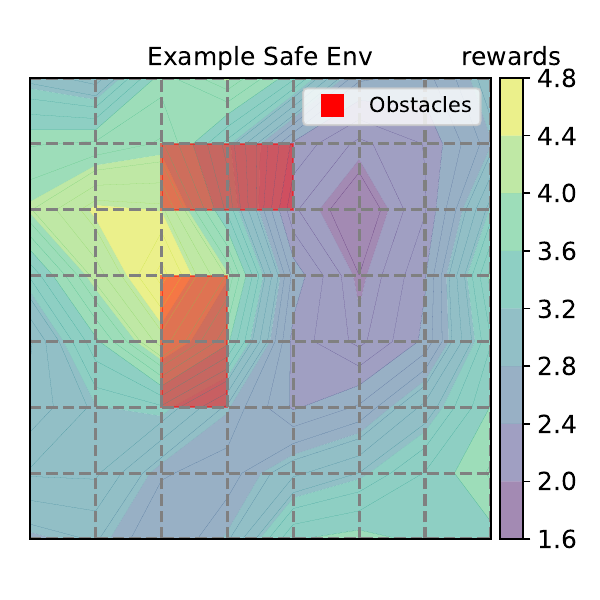}
\vspace{-15pt}
\caption{Example safe exploration environment.}\label{fig:safety}
\end{wrapfigure}
\vspace{-20pt}
\subsection{Safety Considerations}


Safety considerations are common in real-world multi-robot coverage tasks. For example, there are usually obstacles in the environment (like grids painted red in Figure \ref{fig:safety}) that agents should avoid. 
We show that our methods could be combined with safety considerations in this section. We define the set of safe nodes $V_{\text{Safe}}\!=\!\{v\!\in\! V|g(s)\!\geq\! 0\}$ by a safety function $g\!:\!V \!\to\! \mathbb R$. 
The agents should travel within the subgraph containing only nodes in $V_{\text{Safe}}$. We assume there is also uncertainty in the safety function $g$ and agents need to learn  from the noisy samples.  
Similar to the rewards $w(\cdot)$, we maintain a GP for safety function $g$ whose posterior mean and variance at episode $e$ are denoted as $\mu^{(e)}_g(\cdot)$ and $\sigma_g^{(e)}(\cdot)$. 

\begin{wrapfigure}{r}{0.33\textwidth}
    \includegraphics[width=\linewidth]{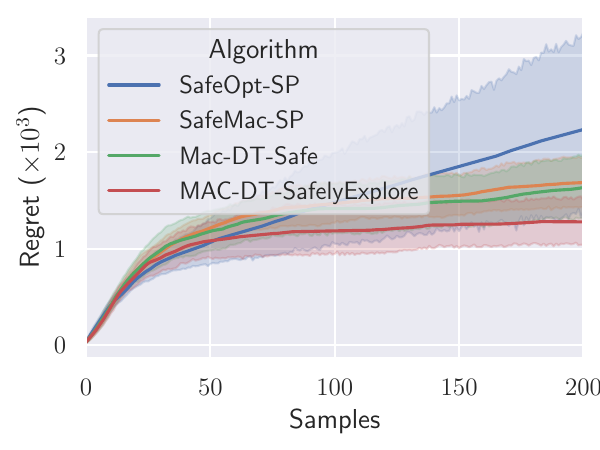}
    \vspace{-15pt}
    \caption{Regret of safe planning algorithm.}
    \label{fig:safe_regret}
    \vspace{-20pt}
\end{wrapfigure}

We made two modifications to Algorithm \ref{alg:main} to consider safety. (i) Changing the oracle and path planning to ensure safety. We use a safe oracle to set the destinations in Line \ref{algline:oracle} following the SafeMac algorithm in \citet{prajapat2022near} that ensures the destinations are safe. The oracle also maintains an estimation of the safe set at episode $e$ denoted by $\hat V^{(e)}_{\text{Safe}}$ and learns to gradually expand it. Then in Line \ref{algline: pathplanning}, the shortest path planning is restricted within the estimated safe set $\hat V^{(e)}_{\text{Safe}}$. (ii) Assigning edge weights during shortest path planning in Line \ref{algline: pathplanning} to encourage safe set expansion. The edges are weighted by the mean value of posterior safety value $\mu^{(e)}_g(\cdot)$. In this way, the agents are prone to travel and collect samples on less safe nodes. These less safe nodes usually lie on the boundaries of the estimated safe set so the agents have more useful samples to expand the safe sets. We name the safety algorithm as \texttt{MAC-DT-SafelyExplore}. 
The algorithm block and details about the safe oracle can be found in Appendix \ref{apdx:safety_algo} in \citet{supp}. 


We compare the \texttt{MAC-DT-SafelyExplore} with three baselines: (a) the ablation study of weighted shortest path planning, which means only adding the safe oracle and restricted path planning to Algorithm \ref{alg:main}. The algorithm is named as $\texttt{Mac-DT-Safe}$; (b) Adding shortest path planning for transient behavior considerations to SafeOpt in \citet{sui2015safe} and SafeMaC in \citet{prajapat2022near}, named \texttt{SafeOpt-SP} and \texttt{SafeMac-SP}. Both two algorithms did not consider transient behaviors originally. 

The regret is shown in Figure \ref{fig:safe_regret}. 
 Results show that \texttt{MAC-DT-SafelyExplore} outperforms all baselines and achieves no-regret coverage quickly after about 50 samples. All the baselines cannot achieve no-regret coverage within 200 samples. The comparison with \texttt{Mac-DT-Safe} shows that the proposed weighted path planning is effective in encouraging safe set expansion. The results also suggest that it is important to consider the transient phase if the problem has safety considerations. 


    

\subsection{Real World Experiments}
\begin{wrapfigure}{r}{0.33\textwidth}   
\includegraphics[width=0.9\linewidth]{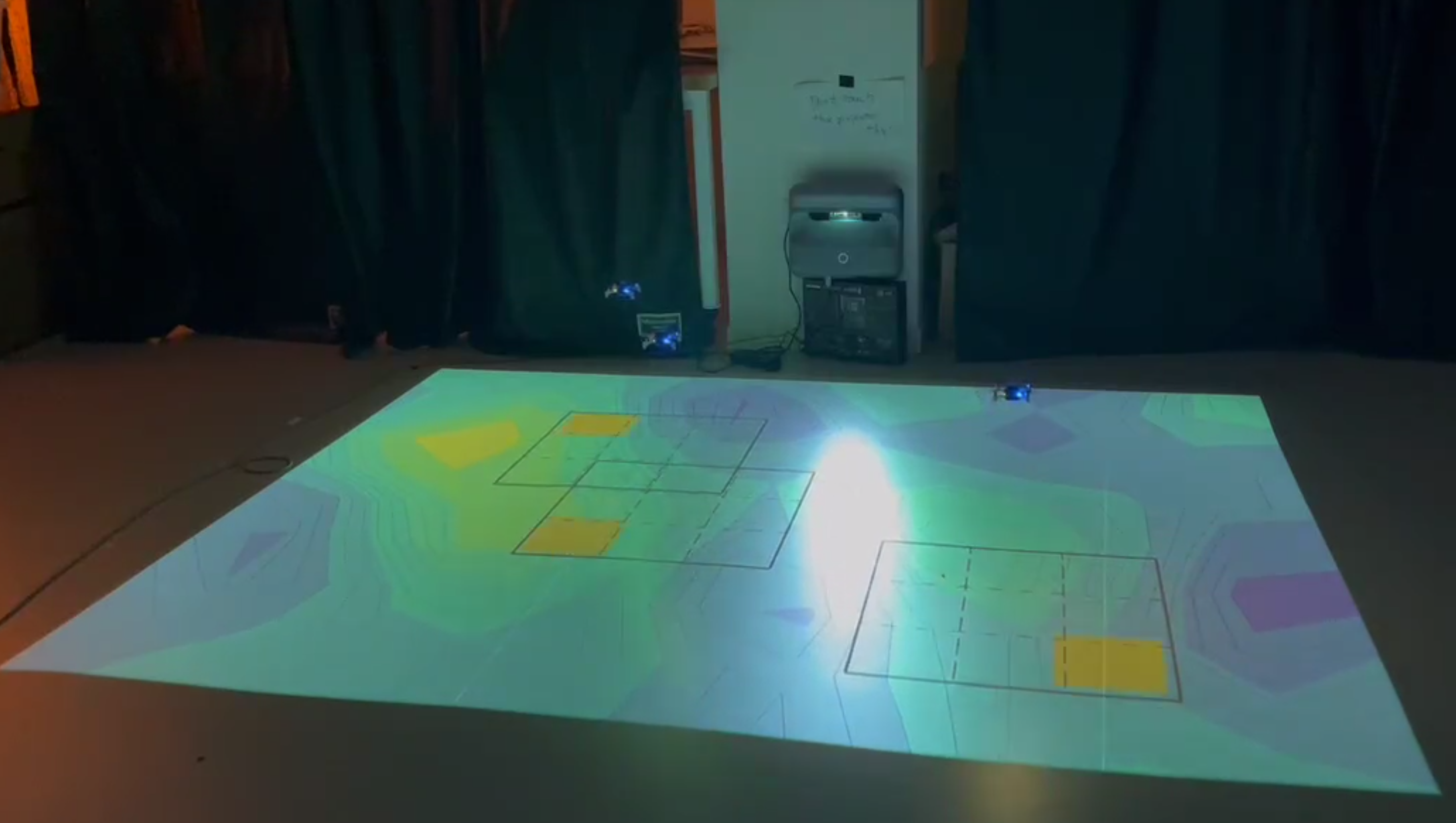}
    \caption{Real-world experiment setup. The projected contour plot indicates the node rewards, and the sensory range and sensory points. }   \label{fig:real_setup}
\end{wrapfigure}
We set up a physical multi-drone coverage testbed as shown in Figure \ref{fig:real_setup}. We fetched the real-time rain data from the OpenWeatherMap and used drones to cover the area with the heaviest rains. We discretize the region into a 13$\times$10 grid. Then we project the real-world weather data on the ground in our lab, where each 17cm square grid represents 1 km$^2$ in the real world.  We use three Crazyflie 2.1 and each can cover the 1-hop neighborhood in the grids. The planning (including collision avoidance) and control are computed onboard the Crazyflie while receiving virtual source signals. The terminal condition is achieving optimal coverage. The experiment video is shown in \url{https://youtu.be/ImgSS5QyhT0}. The three drones find the optimal coverage with only 19 steps and 57 environment samples in total.

\vspace{-5pt}
\section{Conclusion and Future Works}
\vspace{-2pt}
In this paper, we propose the MAC-DT algorithm to solve the MAC problem, which achieves (approximated) regret of $\widetilde O(\sqrt{T})$ even when accounting for the transient behavior. Our results are also supported by numerical studies in multiple settings, showcasing the algorithm's effectiveness and its adeptness in accommodating safety constraints.  Our result admittedly has its limitations. First of all, MAC-DT is not fully decentralized. Secondly, although the simulation results demonstrate that our methods could naturally be combined with safety considerations, our current theoretical analysis doesn't consider safety issues. To address these shortcomings, we envision our future work focusing on the design of decentralized algorithms that not only maintain efficiency but also incorporate robust safety guarantees.



\vspace{-10pt}
\section*{Appendix}
\appendix
\vspace{-5pt}
\textbf{Notations:} We first define some notations that will be useful for the proof. Recall the definition of $n_v(t)$ which denotes the number of times that vertex $v$ has been sampled until time $t$. We define $n_v^{(e)}:= n_v(t_{e+1}-1)$, i.e., $n_v^{(e)}$ is the number of times that grid $u$ is being sampled/covered from $t=1$ to the end of episode $e$. Further, we denote $c_v^{(e)}:= n_v^{(e)} - n_v^{(e-1)}$ as the number of times that grid $v$ is being sampled/covered within episode $e$. 

We define the following matrix $K^{(e)}_{s}:= \kappa^{(e)}(s,s)$, where $s\subseteq V$ is a subset of $V$. Note that for $s$ we allow repetition, i.e. $v\in V$ can appear multiple times in $s$. Additionally, we use $s_{t:\tau} \subseteq V$ to denote the covering profile from time $t$ to $\tau$ (allows repetition).

\section{Proof of Lemma \ref{lemma:information-gain-to-regret} and Lemma \ref{lemma:bound-price-of-optimism}}\label{apdx:bound-price-of-optimism}
We first state some important lemmas that will be used in the proofs.
\begin{lemma}(Quantify the information gain)\label{lemma:quatify-information-gain} ~~$I(Y(s_{1:t_{e+1}-1}^\eval);w) = \sum_{e'=1}^e \frac{1}{2}\log\left(\det\left( I + \sigma^{-2}K^{(e'-1)}_{s^\eval_{t_{e'}:t_{e'+1}-1}}\right)\right)$
\begin{proof} We use $H(X)$ to denote the entropy of a random variable $X$. From the definition of mutual information and properties of entropy function (c.f. \citep{mackay2003information}), we have
    \begin{talign*}
         &\quad I(Y(s^\eval_{1:t_{e+1}-1});w) = H(Y(s^\eval_{1:t_{e+1}-1})) - H(Y(s^\eval_{1:t_{e+1}-1})|w)\\
         &= H(Y(s^\eval_{1:t_{e}-1})) + H(Y(s^\eval_{t_e:t_{e+1}-1})|Y(s^\eval_{1:t_{e}-1})) - \left(H(Y(s^\eval_{1:t_{e}-1})|w)+H(Y(s^\eval_{t_e:t_{e+1}-1})|w) \right)\\
         &= I(Y(s^\eval_{1:t_{e}-1})|w) + H(Y(s^\eval_{t_e:t_{e+1}-1})|Y(s^\eval_{1:t_{e}-1})) - H(Y(s^\eval_{t_e:t_{e+1}-1})|w)
    \end{talign*}
    Further, 
    \vspace{-25pt}
    \begin{talign*}
        &\quad H(Y(s^\eval_{t_e:t_{e+1}-1})|Y(s^\eval_{1:t_{e}-1})) = \frac{1}{2}\log\left(\det\left(2\pi e\left(\sigma^2 I + K^{(e-1)}_{s^\eval_{t_e:t_{e+1}-1}}\right)\right)\right)\\
        &= \frac{1}{2}\log\left(2\pi e(\sigma)^{2\sum_{t=t_e}^{t_{e+1}-1}|s^\eval_t|}\right) + \frac{1}{2}\log\left(\det\left( I + \sigma^{-2}K^{(e-1)}_{s^\eval_{t_e:t_{e+1}-1}}\right)\right)
    \end{talign*}
Additionally, given that $H(Y(s^\eval_{t_e:t_{e+1}-1})|w) = \frac{1}{2}\log\left(2\pi e(\sigma)^{2\sum_{t=t_e}^{t_{e+1}-1}|s^\eval_t|}\right)$,
    we have
    \vspace{-10pt}
    \begin{talign*}
      I(Y(s^\eval_{1:t_{e+1}-1});w) = I(Y(s^\eval_{1:t_{e}-1})|w) + \frac{1}{2}\log\left(\det\left( I + \sigma^{-2}K^{(e-1)}_{s^\eval_{t_e:t_{e+1}-1}}\right)\right).
    \end{talign*}
    \vspace{-10pt}
    Applying this equality iteratively completes the proof.
\end{proof}
\end{lemma}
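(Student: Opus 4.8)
The plan is to prove the identity by setting up an episode-by-episode recursion for the information gain and then telescoping it, exploiting two structural facts about the Gaussian process model: conditioned on $w$ the observations $Y$ are pure independent noise, and conditioned on every sample collected before episode $e$ the predictive law of the episode-$e$ observations is Gaussian with covariance $\sigma^2 I + K^{(e-1)}_{s^\eval_{t_e:t_{e+1}-1}}$. The target sum is exactly the accumulation of the per-episode increments produced by this recursion.

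First I would expand the mutual information through entropy, writing $I(Y(s^\eval_{1:t_{e+1}-1});w) = H(Y(s^\eval_{1:t_{e+1}-1})) - H(Y(s^\eval_{1:t_{e+1}-1})\mid w)$, and split the sample index set into the samples from the first $e-1$ episodes, $s^\eval_{1:t_e-1}$, and the samples from episode $e$, $s^\eval_{t_e:t_{e+1}-1}$. Applying the chain rule of entropy to the unconditioned term gives $H(Y(s^\eval_{1:t_e-1})) + H(Y(s^\eval_{t_e:t_{e+1}-1})\mid Y(s^\eval_{1:t_e-1}))$, while for the term conditioned on $w$ I would use that, once $w$ is fixed, the noise $\epsilon\sim\mathcal N(0,\sigma^2 I)$ is independent across time, so the conditional entropy splits additively as $H(Y(s^\eval_{1:t_e-1})\mid w) + H(Y(s^\eval_{t_e:t_{e+1}-1})\mid w)$. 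Recombining these yields the recursion $I(Y(s^\eval_{1:t_{e+1}-1});w) = I(Y(s^\eval_{1:t_e-1});w) + \big[H(Y(s^\eval_{t_e:t_{e+1}-1})\mid Y(s^\eval_{1:t_e-1})) - H(Y(s^\eval_{t_e:t_{e+1}-1})\mid w)\big]$.

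Next I would evaluate the bracketed increment using the closed-form multivariate Gaussian entropy $\tfrac12\log\det(2\pi e\,\Sigma)$. The term $H(Y(s^\eval_{t_e:t_{e+1}-1})\mid w)$ equals $\tfrac12\log\det(2\pi e\,\sigma^2 I)$ since with $w$ fixed the observations are pure noise of covariance $\sigma^2 I$. For $H(Y(s^\eval_{t_e:t_{e+1}-1})\mid Y(s^\eval_{1:t_e-1}))$, the key observation is that conditioning on the first $e-1$ episodes' samples leaves the posterior over $w$ equal to $\GP(\mu^{(e-1)},\kappa^{(e-1)})$ — precisely the object the algorithm maintains via \eqref{eq:GP-update} — so the predictive covariance of the episode-$e$ observations is $\sigma^2 I + K^{(e-1)}_{s^\eval_{t_e:t_{e+1}-1}}$ and its entropy is $\tfrac12\log\det(2\pi e(\sigma^2 I + K^{(e-1)}_{s^\eval_{t_e:t_{e+1}-1}}))$. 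Subtracting and canceling the common factor through $\det(\sigma^2 I + K)/\det(\sigma^2 I) = \det(I + \sigma^{-2}K)$ leaves the clean increment $\tfrac12\log\det(I + \sigma^{-2}K^{(e-1)}_{s^\eval_{t_e:t_{e+1}-1}})$.

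Finally I would unroll the recursion from episode $e$ down to the base case $I(Y(s^\eval_{1:t_1-1});w) = I(\emptyset;w) = 0$ (using $t_1 = 1$), which telescopes the increments into the claimed sum $\sum_{e'=1}^e \tfrac12\log\det(I + \sigma^{-2}K^{(e'-1)}_{s^\eval_{t_{e'}:t_{e'+1}-1}})$. I expect the main obstacle to lie not in the algebra but in justifying the two conditional-entropy computations rigorously: that conditioned on $w$ the vector $Y$ factorizes into independent noise blocks across episodes, and that the posterior after $e-1$ episodes is exactly $\GP(\mu^{(e-1)},\kappa^{(e-1)})$, so that $\kappa^{(e-1)}$ — and hence the \emph{posterior} matrix $K^{(e-1)}$ conditioned on all prior episodes, rather than any prior covariance — is what enters each increment. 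Both facts follow from the Gaussian conditioning identities in \eqref{eq:GP-update}, but they must be applied with care to keep the conditioning consistent across the recursion.
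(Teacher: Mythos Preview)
Your proposal is correct and follows essentially the same route as the paper's proof: expand $I(Y;w)=H(Y)-H(Y\mid w)$, apply the entropy chain rule together with conditional independence of the noise given $w$ to obtain the per-episode recursion, evaluate both conditional entropies via the multivariate Gaussian formula (predictive covariance $\sigma^2 I + K^{(e-1)}$ versus pure-noise covariance $\sigma^2 I$), and telescope. The only difference is cosmetic---you state the base case $I(\emptyset;w)=0$ explicitly, whereas the paper simply says ``applying this equality iteratively.''
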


\begin{lemma}\label{lemma:bound-Ke}
If for all $v\in s_{t_{e+1}-1}$, $n_v^{(e-1)} \ge 1$, then $ \lmax\left(K^{(e-1)}_{s_{t_{e}:t_{e+1}-1}}\right)\le \sigma^2$.

otherwise, there exists at least one $v\in s_{t_{e+1}-1}$, such that $n_v^{(e-1)} = 0$, then $\lmax\left(K^{(e-1)}_{s_{t_{e}:t_{e+1}-2}}\right)\le \sigma^2$
\end{lemma}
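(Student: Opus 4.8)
The plan is to reduce the maximum eigenvalue of the (repetition-allowing) posterior covariance to a statement about per-vertex sample counts, so that the doubling trick can be invoked directly. Write $s := s_{t_e:t_{e+1}-1}$ (respectively $s_{t_e:t_{e+1}-2}$ in the second case) as a multiset, let $U\subseteq V$ be its set of \emph{distinct} vertices, and let each $v\in U$ appear $a_v$ times in $s$. Collecting repeated columns, one can write $K^{(e-1)}_s = P\,\kappa^{(e-1)}(U,U)\,P^\top$, where $P\in\{0,1\}^{|s|\times|U|}$ is the selection matrix whose rows are standard basis vectors and which satisfies $P^\top P = \Lambda_A := \mathrm{diag}(a_v)_{v\in U}$. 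Since $P\,\kappa^{(e-1)}(U,U)\,P^\top$ and $\kappa^{(e-1)}(U,U)\,P^\top P$ share their nonzero spectrum, and the latter is similar to the symmetric matrix $\Lambda_A^{1/2}\kappa^{(e-1)}(U,U)\Lambda_A^{1/2}$, I obtain
\[
 \lmax\!\big(K^{(e-1)}_s\big) = \lmax\!\big(\Lambda_A^{1/2}\,\kappa^{(e-1)}(U,U)\,\Lambda_A^{1/2}\big),
\]
so it suffices to show $\Lambda_A^{1/2}\kappa^{(e-1)}(U,U)\Lambda_A^{1/2}\preceq \sigma^2 I$. I expect this spectral reduction (rather than a cruder trace bound) to be exactly what lets the final bound be $\sigma^2$ with no extra factor of $|U|$ or $\sqrt N$, matching Remark~\ref{rmk:bound-Ke}.

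The analytic core — and the step I expect to be hardest — is the matrix inequality $\kappa^{(e-1)}(U,U)\preceq \sigma^2\,\Lambda_B^{-1}$, where $\Lambda_B := \mathrm{diag}(n_v^{(e-1)})_{v\in U}$. Here I would use two GP facts. First, conditioning only shrinks covariance in the Loewner order, so $\kappa^{(e-1)}(U,U)$ (the posterior given \emph{all} samples up to $t_e-1$) is dominated by the posterior given only the samples taken at vertices of $U$. Second, because repeated noisy observations of the same vertex are perfectly correlated, observing $w(v)$ exactly $n_v^{(e-1)}$ times is equivalent to a single observation with noise variance $\sigma^2/n_v^{(e-1)}$; hence the $U$-only posterior has covariance $\kappa(U,U)-\kappa(U,U)\big(\kappa(U,U)+R\big)^{-1}\kappa(U,U)$ with $R:=\sigma^2\Lambda_B^{-1}\succ0$. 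A direct manipulation gives
\[
 R - \Big[\kappa(U,U)-\kappa(U,U)\big(\kappa(U,U)+R\big)^{-1}\kappa(U,U)\Big] = R\big(\kappa(U,U)+R\big)^{-1}R \succeq 0,
\]
which needs no invertibility of the prior $\kappa(U,U)$ (only $R\succ0$) and yields $\kappa^{(e-1)}(U,U)\preceq R = \sigma^2\Lambda_B^{-1}$.

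Combining the two displays, $\Lambda_A^{1/2}\kappa^{(e-1)}(U,U)\Lambda_A^{1/2}\preceq \sigma^2\,\Lambda_A^{1/2}\Lambda_B^{-1}\Lambda_A^{1/2} = \sigma^2\,\mathrm{diag}(a_v/n_v^{(e-1)})$, so everything reduces to $a_v\le n_v^{(e-1)}$ for every $v\in U$ — i.e.\ no vertex is sampled more times inside episode $e$ than it had been before — which is precisely what the doubling trick enforces, the two cases of the lemma handling its boundary behavior. In the first case the hypothesis $n_v^{(e-1)}\ge1$ for all $v\in s_{t_{e+1}-1}$ rules out any fresh vertex ($n_v^{(e-1)}=0$) at the terminal step; since a fresh vertex would trigger termination the instant it is first sampled, every vertex appearing anywhere in $[t_e,t_{e+1}-1]$ then satisfies $n_v^{(e-1)}\ge1$, and the no-doubling condition through $t_{e+1}-2$ together with the fact that $n_v$ rises by at most one per step gives $n_v(t_{e+1}-1)\le 2n_v^{(e-1)}$, i.e.\ $a_v=c_v^{(e)}\le n_v^{(e-1)}$. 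In the second case the offending fresh vertex can only have been sampled at the terminal step, so passing to $s_{t_e:t_{e+1}-2}$ removes it; every remaining vertex has $n_v^{(e-1)}\ge1$, and since no doubling has occurred by $t_{e+1}-2$ its within-episode count is at most $n_v^{(e-1)}-1\le n_v^{(e-1)}$. In both cases $\Lambda_A\preceq\Lambda_B$, hence $\lmax\big(K^{(e-1)}_s\big)\le\sigma^2$ for the relevant profile, completing the argument.
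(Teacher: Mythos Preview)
Your argument is correct and reaches the same bound, but by a genuinely different route than the paper. The paper constructs an auxiliary \emph{diagonal} prior kernel $\widetilde{\kappa}$ with $\widetilde{\kappa}(u,u)=\sum_v\kappa(u,v)$ so that $\widetilde{K}\succeq K$ by diagonal dominance, then proves a standalone Schur--complement monotonicity lemma to push this through the posterior ($K^{(e-1)}\preceq\widetilde{K}^{(e-1)}$); because $\widetilde{K}^{(e-1)}$ is diagonal, the repetition matrix $K^{(e-1)}_s$ is dominated by a block-diagonal matrix of all-ones blocks, giving $\lambda_{\max}=\max_u c_u^{(e)}/(\sigma^{-2}n_u^{(e-1)}+\widetilde{\kappa}(u,u)^{-1})\le\sigma^2$ directly from the doubling inequality. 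You instead strip off repetitions via the selection matrix to reduce to $\lambda_{\max}(\Lambda_A^{1/2}\kappa^{(e-1)}(U,U)\Lambda_A^{1/2})$, and bound $\kappa^{(e-1)}(U,U)\preceq \sigma^2\Lambda_B^{-1}$ by conditioning monotonicity plus the one-line identity $R-[K-K(K+R)^{-1}K]=R(K+R)^{-1}R\succeq 0$. Both collapse to the same per-vertex inequality $a_v=c_v^{(e)}\le n_v^{(e-1)}$, and your handling of the two cases (fresh vertices forcing immediate termination, truncation to $t_{e+1}-2$ otherwise) matches the paper's. Your route is arguably more elementary---it replaces the bespoke Schur--complement lemma with a standard GP monotonicity fact and avoids introducing the dominating kernel---while the paper's construction has the structural appeal of an explicit independent GP that dominates throughout.
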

\begin{rmk}\label{rmk:bound-Ke}
Lemma \ref{lemma:bound-Ke} serves as one of the technical novelty of our work. Notably, compared with \citep{prajapat2022near} where they directly bound the maximum value of the covariance matrix by the trace of the matrix, i.e., $\lmax\left(K^{(e-1)}_{s_{t_{e}:t_{e+1}-1}}\right) \le \textup{trace}\left(K^{(e-1)}_{s_{t_{e}:t_{e+1}-1}}\right) \sim O(N)$, we bound the maximum eigenvalue in a more careful manner and thus eliminates the dependency on the number of agent $N$, thereby removing the $\sqrt{N}$ dependency in our regret bound. The proof is technically involved and is  deferred to Appendix \ref{apdx:bound-Ke} in \citep{supp}.   
\end{rmk}

\begin{lemma}\label{lemma:log-det-to-trace}
   \! If $~\forall v\!\in\! s_{t_{e\!+\!1}-1}$, $n_v^{(e\!-\!1)} \!\ge\! 1$, then $ \frac{\sigma^2}{\log 2}  \log\left(\det\left( I \!+\! \sigma^{-2}K^{(e-1)}_{s^\eval_{t_{e}:t_{e+1}-1}}\right)\right)\ge \sum_{t=t_e}^{t_{e+1}-1}\|\sigma^{({e-1})}(s^\eval_t)\|_2^2,$
    otherwise, $\frac{\sigma^2}{\log 2}\log\left(\det\left( I + \sigma^{-2}K^{(e-1)}_{s^\eval_{t_{e}:t_{e+1}-1}}\right)\right)\ge \sum_{t=t_e}^{t_{e+1}-2}\|\sigma^{({e-1})}(s^\eval_t)\|_2^2 $.

    \begin{proof}
    For the first case, from Lemma \ref{lemma:bound-Ke} we have that $\lambda\le \sigma^2$ for all $\lambda \in  \lambda\left(K^{(e-1)}_{s^\eval_{t_{e}:t_{e+1}-1}}\right)$, then
      \begin{talign*}
         &\quad \log\left(\det\left( I + \sigma^{-2}K^{(e-1)}_{s^\eval_{t_{e}:t_{e+1}-1}}\right)\right)  =  \sum_{\lambda\in \lambda\left(K^{(e-1)}_{s^\eval_{t_{e}:t_{e+1}-1}}\right)}\log\left(\det\left( I + \sigma^{-2}\lambda\right)\right)\\
    &\ge \sigma^{-2}\log2\sum_{e=1}^E  \sum_{\lambda\in \lambda\left(K^{(e-1)}_{s^\eval_{t_{e}:t_{e+1}-1}}\right)}\lambda ~~\textup{(} \lambda \le \sigma^2\textup{ by Lemma \ref{lemma:bound-Ke} and that } \log(1+x) \ge (\log 2)x, 0\le x\le 1 \textup{)} \\
    & = \sigma^{-2}\log2\sum_{e=1}^E\tr\left(K^{(e-1)}_{s^\eval_{t_{e}:t_{e+1}-1}}\right) =\sigma^{-2}\log2\sum_{e=1}^E\sum_{t=t_e}^{t_{e+1}-1}\|\sigma^{({e-1})}(s^\eval_t)\|_2^2.
      \end{talign*}  
      For the second case, i.e.,  there exists at least one $v\in s_{t_{e+1}-1}$, such that $n_v^{(e-1)} = 0$, we first argue that for all $v\in s_{t_e:t_{e+1}-2}$, $n_v^{(e-1)} \ge 1$. This can be easily verified by contradiction. If there exists at least one $v\in s_{t_e:t_{e+1}-2}$ such that $n_v = 0$, then given the doubling trick in the algorithm, the episode should terminate at the time step when $v$ is sampled, which contradicts the fact that it terminates at $t_{e+1}-1$. Then from Lemma 
      \ref{lemma:bound-Ke} we have that $\lambda\le \sigma^2$ for all $\lambda \in  \lambda\left(K^{(e-1)}_{s^\eval_{t_{e}:t_{e+1}-2}}\right)$, and similarly, $\log\left(\det\left( I + \sigma^{-2}K^{(e-1)}_{s^\eval_{t_{e}:t_{e+1}-2}}\right)\right) \ge\sigma^{-2}\log2\sum_{e=1}^E\sum_{t=t_e}^{t_{e+1}-2}\|\sigma^{({e-1})}(s^\eval_t)\|_2^2.$
      
   \noindent Since $K^{(e-1)}_{s^\eval_{t_{e}:t_{e\!+\!1}\!-\!2}}\!\!\!\!\!$ is a principal submatrix of $K^{(e-1)}_{s^\eval_{t_{e}:t_{e\!+\!1}\!-\!1}}\!\!\!,$ from Lemma \ref{lemma:log-det-principal-submatrix} in \citep{supp}, we have that\\
      $ \log\!\left(\!\det\left(\! I\! +\! \sigma^{-2}K^{(e-1)}_{s^\eval_{t_{e}:t_{e+1}-1}}\!\right)\!\right)   \!\ge \log\!\left(\!\det\!\left(\! I \!+\! \sigma^{-2}K^{(e-1)}_{s^\eval_{t_{e}:t_{e+1}-2}}\!\right)\!\right)\! \ge\!\sigma^{\!-2}\!\log2\sum_{t=t_e}^{t_{e\!+\!1}-2}\|\sigma^{({e\!-\!1})}(s^\eval_t)\|_2^2.$
    \end{proof}
\end{lemma}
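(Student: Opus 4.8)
The plan is to reduce both inequalities to the scalar estimate $\log(1+x)\ge(\log 2)\,x$, valid for $x\in[0,1]$ since $\log(1+\cdot)$ is concave and agrees with the linear chord $(\log 2)x$ at $x=0$ and $x=1$. I would apply this eigenvalue-by-eigenvalue to the covariance matrix, drawing the crucial eigenvalue bound from Lemma \ref{lemma:bound-Ke}. The first thing to record is that the right-hand sides are exactly traces: the diagonal entries of $K^{(e-1)}_{s^\eval_{t_e:t_{e+1}-1}}$ are the posterior variances $\kappa^{(e-1)}(v,v)=\sigma^{(e-1)}(v)^2$ at the evaluation points, so $\sum_{t=t_e}^{t_{e+1}-1}\|\sigma^{(e-1)}(s^\eval_t)\|_2^2 = \tr\!\left(K^{(e-1)}_{s^\eval_{t_e:t_{e+1}-1}}\right)$, and likewise for the truncated sum.

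For the first case I would diagonalize and write $\log\det(I+\sigma^{-2}K)=\sum_{\lambda}\log(1+\sigma^{-2}\lambda)$, the sum ranging over the eigenvalues $\lambda$ of $K:=K^{(e-1)}_{s^\eval_{t_e:t_{e+1}-1}}$. Under the hypothesis that every $v$ has been sampled at least once before the episode, Lemma \ref{lemma:bound-Ke} gives $\lmax(K)\le\sigma^2$, so each $\sigma^{-2}\lambda\in[0,1]$ and the scalar bound yields $\log(1+\sigma^{-2}\lambda)\ge(\log 2)\sigma^{-2}\lambda$. Summing over eigenvalues and using $\sum_\lambda\lambda=\tr(K)$ gives $\log\det(I+\sigma^{-2}K)\ge(\log 2)\sigma^{-2}\tr(K)$, which rearranges to the claim after multiplying by $\sigma^2/\log 2$.

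The second case is where the care is needed, and I expect it to be the main obstacle. Here some vertex sampled at the terminal step $t_{e+1}-1$ has $n_v^{(e-1)}=0$, so Lemma \ref{lemma:bound-Ke} only controls the eigenvalues of the truncated matrix $K^{(e-1)}_{s^\eval_{t_e:t_{e+1}-2}}$. First I would argue, by contradiction against the doubling-trick termination rule (line \ref{algline:doubling_trick}), that every vertex appearing in $s^\eval_{t_e:t_{e+1}-2}$ already satisfies $n_v^{(e-1)}\ge 1$: if some such $v$ had $n_v^{(e-1)}=0$, its first sampling within the episode would itself trigger termination, contradicting that the episode runs to $t_{e+1}-1$. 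With this established, the eigenvalue argument of the first case applies verbatim to the submatrix, giving $\log\det(I+\sigma^{-2}K^{(e-1)}_{s^\eval_{t_e:t_{e+1}-2}})\ge(\log 2)\sigma^{-2}\sum_{t=t_e}^{t_{e+1}-2}\|\sigma^{(e-1)}(s^\eval_t)\|_2^2$.

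Finally, since $K^{(e-1)}_{s^\eval_{t_e:t_{e+1}-2}}$ is a principal submatrix of $K^{(e-1)}_{s^\eval_{t_e:t_{e+1}-1}}$, the monotonicity of $\log\det(I+\sigma^{-2}\cdot)$ under passage to principal submatrices (Lemma \ref{lemma:log-det-principal-submatrix}) lets me replace the truncated log-determinant by the full one, which concludes the bound. The genuinely delicate points are thus invoking the correct branch of Lemma \ref{lemma:bound-Ke} in each case and justifying the submatrix restriction from the termination rule; the scalar concavity inequality and the trace identity are routine and I would not belabor them.
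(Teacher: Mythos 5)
Your proposal is correct and takes essentially the same route as the paper's own proof: the same trace identity for the posterior variances, the same eigenvalue-wise application of $\log(1+x)\ge(\log 2)x$ under the bound $\lmax\le\sigma^2$ from Lemma \ref{lemma:bound-Ke}, the same doubling-trick contradiction showing every vertex in $s^\eval_{t_e:t_{e+1}-2}$ has $n_v^{(e-1)}\ge 1$ in the second case, and the same principal-submatrix monotonicity step via Lemma \ref{lemma:log-det-principal-submatrix}. No gaps to report.
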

\vspace{-10pt}
We are now ready to state and prove the formal version of Lemma \ref{lemma:information-gain-to-regret}.
\begin{lemma}[Formal version of Lemma \ref{lemma:information-gain-to-regret}]\label{lemma:information-gain-to-regret-formal}
    \begin{talign*}
        \sum_{e=1}^E\!\left(\sum_{t=t_e}^{t=t_{e+1}-2}\|\sigma^{({e-1})}(s_t^\eval)\|_2^2 + \mathbf{1}\{\forall v\!\in \!s_{t_{e\!+\!1}\!-\!1}, n_v^{(e-1)} \!\ge\! 1\}\|\sigma^{({e-1})}(s_t^\eval)\|_2^2\right)\le \frac{2\sigma^{2}}{\log2} I(Y(s^\eval_{1:t_{E+1}-1});w) 
    \end{talign*}
\end{lemma}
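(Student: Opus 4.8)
The plan is to obtain the bound by combining the exact information-gain identity of Lemma~\ref{lemma:quatify-information-gain} with the per-episode log-determinant lower bound of Lemma~\ref{lemma:log-det-to-trace}, summed over all episodes. First I would instantiate Lemma~\ref{lemma:quatify-information-gain} at $e=E$, which gives the exact decomposition
\begin{equation*}
I(Y(s^\eval_{1:t_{E+1}-1});w) = \sum_{e=1}^E \frac{1}{2}\log\left(\det\left(I + \sigma^{-2}K^{(e-1)}_{s^\eval_{t_e:t_{e+1}-1}}\right)\right),
\end{equation*}
so that $\frac{2\sigma^2}{\log 2}\, I(Y(s^\eval_{1:t_{E+1}-1});w) = \sum_{e=1}^E \frac{\sigma^2}{\log 2}\log\left(\det\left(I + \sigma^{-2}K^{(e-1)}_{s^\eval_{t_e:t_{e+1}-1}}\right)\right)$. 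The whole task then reduces to lower-bounding each summand on the right by the corresponding episode-$e$ contribution appearing on the left-hand side of the claim.

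This is exactly what Lemma~\ref{lemma:log-det-to-trace} delivers, and the key step is to merge its two cases into a single statement via the indicator. Fixing an episode $e$: in the first case, where every $v\in s_{t_{e+1}-1}$ satisfies $n_v^{(e-1)}\ge 1$, Lemma~\ref{lemma:log-det-to-trace} bounds the log-determinant term below by $\sum_{t=t_e}^{t_{e+1}-1}\|\sigma^{(e-1)}(s_t^\eval)\|_2^2$, which I would split as $\sum_{t=t_e}^{t_{e+1}-2}\|\sigma^{(e-1)}(s_t^\eval)\|_2^2 + \|\sigma^{(e-1)}(s_{t_{e+1}-1}^\eval)\|_2^2$; in the second case it bounds below only by $\sum_{t=t_e}^{t_{e+1}-2}\|\sigma^{(e-1)}(s_t^\eval)\|_2^2$. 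Both are captured by the single inequality
\begin{equation*}
\frac{\sigma^2}{\log 2}\log\left(\det\left(I + \sigma^{-2}K^{(e-1)}_{s^\eval_{t_e:t_{e+1}-1}}\right)\right) \ge \sum_{t=t_e}^{t_{e+1}-2}\|\sigma^{(e-1)}(s_t^\eval)\|_2^2 + \mathbf{1}\{\forall v\in s_{t_{e+1}-1},\, n_v^{(e-1)}\ge 1\}\,\|\sigma^{(e-1)}(s_{t_{e+1}-1}^\eval)\|_2^2,
\end{equation*}
since the indicator toggles the terminal-step term on or off precisely according to which case of Lemma~\ref{lemma:log-det-to-trace} applies.

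Finally I would sum this per-episode inequality over $e=1,\dots,E$ and substitute the identity from the first step into the left-hand side, which yields exactly the claimed bound. I do not anticipate any genuine obstacle: all the analytical content — the eigenvalue bound $\lmax(K^{(e-1)}_{\cdot})\le \sigma^2$ from Lemma~\ref{lemma:bound-Ke} and the elementary estimate $\log(1+x)\ge(\log 2)x$ on $[0,1]$ used inside Lemma~\ref{lemma:log-det-to-trace} — is already absorbed into the cited lemmas, so this result is essentially their corollary. The only point demanding care is bookkeeping: reading the free index $t$ in the indicator term of the statement as the terminal step $t=t_{e+1}-1$ of episode $e$, and invoking the doubling-trick argument (any vertex $v$ with $n_v^{(e-1)}=0$ can only be sampled at the very last step, else the episode would have terminated earlier) to justify why the second case drops only that single terminal term rather than anything more.
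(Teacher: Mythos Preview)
Your proposal is correct and follows essentially the same route as the paper: apply Lemma~\ref{lemma:quatify-information-gain} to decompose the mutual information, apply Lemma~\ref{lemma:log-det-to-trace} per episode, and merge its two cases using the indicator before summing. Your observation that the free $t$ in the indicator term should be read as $t=t_{e+1}-1$ is also how the paper uses it.
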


\begin{proof}[Proof of Lemma \ref{lemma:information-gain-to-regret-formal}]
From Lemma \ref{lemma:quatify-information-gain} and \ref{lemma:log-det-to-trace} we have
\begin{talign*}
    &\quad I(Y(s^\eval_{1:t_{E+1}-1});w) = \sum_{e=1}^E \frac{1}{2}\log\left(\det\left( I + \sigma^{-2}K^{(e-1)}_{s^\eval_{t_{e}:t_{e+1}-1}}\right)\right)\\
    &\ge\frac{\sigma^{-2}\log2}{2}\sum_{e=1}^E\left(\sum_{t=t_e}^{t=t_{e+1}-2}\|\sigma^{({e-1})}(s_t^\eval)\|_2^2 + \mathbf{1}\{\forall v\in s_{t_{e+1}-1}, n_v^{(e-1)} \ge 1\}\|\sigma^{({e-1})}(s_t^\eval)\|_2^2\right). \vspace{-50pt}
\end{talign*} 
\end{proof}\vspace{-20pt}
\begin{proof}[Proof of Lemma \ref{lemma:bound-price-of-optimism}]
    From Cauchy Schwarz inequality and that $\beta^{(e+1)}\ge \beta^{(e)}$,
    \begin{talign*}
    & \!\!\left(\bE\sum_{e=1}^E2\beta^{(e)}\sum_{t=t_e}^{t=t_{e\!+\!1}\!-\!1}\|\sigma^{({e\!-\!1})}\!(s_t)\|_1\right)^{2}\!\!\!\le\! \bE 4(\beta^{(E)})^2 \!\left(\sum_{e\!=\!1}^E\sum_{t=t_e}^{t=t_{e\!+\!1}\!-\!1}\!|s_t|\right)\left(\sum_{e=1}^E\sum_{t=t_e}^{t=t_{e\!+\!1}\!-\!1}\|\sigma^{({e\!-\!1})}(s_t)\|_2^2\right) \\
    &\le \bE 4(\beta^{(E)})^2 |V|(t_{E+1}-1)\left(K\underbrace{\sum_{e=1}^E\sum_{t=t_e}^{t=t_{e+1}-1}\|\sigma^{({e-1})}(s_t^\eval)\|_2^2}_{\textup{Part A}}\right)
\end{talign*}
Since
\vspace{-15pt}
\begin{talign*}
    & \qquad\qquad\qquad \textup{Part A} \!=\! \sum_{e\!=\!1}^E\!\!\left(\sum_{t=t_e}^{t\!=\!t_{e\!+\!1}\!-\!2}\|\sigma^{({e\!-\!1})}\!(s_t^\eval)\|_2^2 \!+\! \mathbf{1}\{\forall v\!\in\! s_{t_{e+1}-1}, n_v^{(e\!-\!1)} \!\ge\! 1\}\|\sigma^{({e\!-\!1})}(s_t^\eval)\|_2^2\right) \\
    & + \sum_{e=1}^E\left(\mathbf{1}\{\exists v\in s_{t_{e+1}-1}^\eval, n_v^{(e-1)} =0\}\|\sigma^{({e-1})}(s_t^\eval)\|_2^2\right)
    \le  \frac{2\sigma^{2}}{\log2} I(Y(s^\eval_{1:t_{E+1}-1});w)  + N|V|\max_{v\in V}\kappa(v,v) 
\end{talign*}
where the last inequality comes from Lemma \ref{lemma:information-gain-to-regret-formal} and the fact that $\sigma^{(0)}(v) = \max_{v\in V}\sqrt{\kappa(v,v)}$.
Thus 
\begin{talign*}
    \bE\sum_{e\!=\!1}^E\!2\beta^{(e)}\sum_{t=t_e}^{t\!=\!t_{e\!+\!1}\!-\!1}\!\|\sigma^{({e-1})}(s_t)\|_1 &\!\le\! 2\beta^{(E)}\sqrt{K|V|(t_{E\!+\!1}\!-\!1)}\sqrt{\frac{2\sigma^{2}}{\log2} I(Y(s^\eval_{1:t_{E+1}-1});w)  + N|V|\max_{v}\!\kappa(v,v) }\\
    &\!\le\!4\sigma\beta^{(E)}\sqrt{K|V|(t_{E\!+\!1}\!-\!1)\gamma_{N(t_{E+1}-1)}} + 2\beta^{(E)} |V|\sqrt{nK\max_v\!\kappa(v,v)}
\end{talign*}
\end{proof}

\vspace{-35pt}
\section{Proof of Theorem \ref{theorem:regret}}
Before proving the theorem, we first state the following lemma that suggests that with probability at least $1-\delta$, the algorithm will operate on the `clean events':
\begin{lemma}[Lemma 5.1 in \citep{srinivas2009gaussian}]\label{lemma:clean-event-probability}
    By setting $\beta^{(e)} = \sqrt{2\log\left(|V|\pi^2 e^2/6\delta\right)}$, then \\
    $  \textstyle  \left|w(v) -\mu^{(e-1)}(v)\right|\le \beta^{(e)}\sigma^{(e-1)}(v), ~~\forall v\in V, \forall e = 1,2,\dots,$
    holds with probability at least $1-\delta$.
\end{lemma}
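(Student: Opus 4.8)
The plan is to prove Lemma~\ref{lemma:clean-event-probability} as a single union bound of a Gaussian tail inequality applied simultaneously over every vertex $v\in V$ and every episode index $e\ge 1$. Three ingredients are needed: (i) the fact that, conditioned on the data collected before episode $e$, the posterior marginal of $w(v)$ is exactly the Gaussian $\mathcal{N}(\mu^{(e-1)}(v),(\sigma^{(e-1)}(v))^2)$; (ii) the standard two-sided Gaussian tail bound $\Pr[|Z|>c]\le e^{-c^2/2}$ for $Z\sim\mathcal{N}(0,1)$ and $c>0$ (which follows from the one-sided inequality $\Pr[Z>c]\le\tfrac12 e^{-c^2/2}$); and (iii) the specific choice of $\beta^{(e)}$ together with the summability $\sum_{e\ge1}e^{-2}=\pi^2/6$, which makes the accumulated failure probability telescope to exactly $\delta$.

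First I would fix an arbitrary vertex $v\in V$ and an arbitrary deterministic episode index $e$, and condition on the filtration $\mathcal{F}_{e-1}$ generated by all evaluation points and noisy observations gathered up to the start of episode $e$, i.e. the data set $D$ used to form $\mu^{(e-1)},\kappa^{(e-1)}$ via \eqref{eq:GP-update}. Since $w\sim\GP(\mu,\kappa)$ is the prior and the observations are $Y=w+\epsilon$ with independent $\mathcal{N}(0,\sigma^2)$ noise, the GP conditioning identity \eqref{eq:GP-update} yields that the posterior of $w(v)$ given $\mathcal{F}_{e-1}$ is Gaussian with mean $\mu^{(e-1)}(v)$ and variance $(\sigma^{(e-1)}(v))^2=\kappa^{(e-1)}(v,v)$. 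Applying the two-sided tail bound to the standardized variable $Z=(w(v)-\mu^{(e-1)}(v))/\sigma^{(e-1)}(v)$ gives, for every realization of $\mathcal{F}_{e-1}$,
\[
\Pr\!\left[\,\left|w(v)-\mu^{(e-1)}(v)\right|>\beta^{(e)}\sigma^{(e-1)}(v)\,\middle|\,\mathcal{F}_{e-1}\right]\le e^{-(\beta^{(e)})^2/2},
\]
and taking expectation over $\mathcal{F}_{e-1}$ removes the conditioning. Substituting $\beta^{(e)}=\sqrt{2\log(|V|\pi^2 e^2/6\delta)}$ makes the right-hand side equal to $6\delta/(|V|\pi^2 e^2)$.

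Finally I would union bound over the countable index set $\{(v,e):v\in V,\ e\ge1\}$: the total failure probability is at most $\sum_{e\ge1}\sum_{v\in V}6\delta/(|V|\pi^2 e^2)=\tfrac{6\delta}{\pi^2}\sum_{e\ge1}e^{-2}=\tfrac{6\delta}{\pi^2}\cdot\tfrac{\pi^2}{6}=\delta$, so the clean event holds with probability at least $1-\delta$. The only genuinely subtle point — and the step I expect to require the most care — is the posterior-Gaussianity claim (i) under \emph{adaptive} sampling and \emph{random} episode lengths: the evaluation points $v^\eval_{i,t}$ and the episode boundaries $t_e$ are themselves chosen as functions of past observations, so they are not fixed in advance. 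The resolution is that $e$ is used purely as a deterministic counter while one conditions on the full history $\mathcal{F}_{e-1}$; once that history is fixed, the sampling locations become measurable and the vanilla Gaussian conditioning identity applies with no loss of Gaussianity. This previsibility argument is exactly what lets the per-$(v,e)$ bound hold for every realization, and hence survive the union bound.
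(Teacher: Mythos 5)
Your proposal is correct, and it coincides with the paper's approach: the paper gives no proof of its own but cites Lemma~5.1 of \citet{srinivas2009gaussian}, whose proof is exactly your argument --- posterior Gaussianity of $w(v)$ conditional on the (adaptively collected, but history-measurable) data, the tail bound $\Pr[|Z|>c]\le e^{-c^2/2}$, and a union bound over $v\in V$ and $e\ge 1$ using $\sum_{e\ge 1}e^{-2}=\pi^2/6$. Your closing remark on previsibility under adaptive sampling and random episode boundaries is the right resolution of the one subtle point and matches how the cited proof handles it.
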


\begin{proof}[Proof of Theorem \ref{theorem:regret}]
We set $E$ to be such that $t_{E} < T \le t_{E+1}-1$.
    From Lemma \ref{lemma:clean-event-probability} and \eqref{eq:regret-decomposition}, we have that by setting $\beta^{(e)} = \sqrt{2\log\left(|V|\pi^2 e^2/6\delta\right)}$, then with probability $1-\delta$
    \begin{talign*}
      &\quad R^\alpha(\texttt{Alg}, T, w) \le \bE_{t_E < T\le t_{E+1}-1}R^\alpha(\texttt{Alg}, t_{E+1}-1, w)
      \\ &\le  \underbrace{\bE \sum_{e=1}^E\sum_{t=t_e}^{t=t_{e+1}-1}[\alpha\|w(s^\star)\|_1- \|w_{\UCB}^{(e)}(s_t)\|_1]}_{\textup{Destination Switch}} + \underbrace{\bE \sum_{e=1}^E\sum_{t=t_e}^{t=t_{e+1}-1}2\beta^{(e)}\|\sigma^{({e-1})}(s_t)\|_1]}_{\textup{Price of Optimism}}\\
       &\stackrel{\textup{(Lemma \ref{lemma:destination-switch} and \ref{lemma:bound-price-of-optimism})}}{\hspace{-15pt}\le} \bE nD\K|V|\max_v(\mu(v) +\beta^{(1)} \sqrt{\kappa(v,v)})(\log(t_{E\!+\!1}\!-\!1)\!+\!1)\\
       &\quad + 4\sigma\beta^{(E)}\sqrt{K|V|(t_{E\!+\!1}\!-\!1)\gamma_{N(t_{E+1}-1)}} + 2\beta^{(E)} |V|\sqrt{nK\max_v\kappa(v,v)}
    \end{talign*}
    From the doubling trick, we know that $t_{E+1}-1\le 2T$. Thus
    \begin{talign*}
       &R^\alpha(\texttt{Alg}, T, w) \le \bE nD\K|V||\max_v(\mu(v) +\beta^{(1)} \sqrt{\kappa(v,v)})(\log(2T)+1)  \\
       &\quad\qquad \qquad\qquad\qquad+ 4\sigma\beta^{(E)}\sqrt{2K|V|T\gamma_{2nT}} + 2\beta^{(E)} |V|\sqrt{nK\max_v\kappa(v,v)}\\
        &\!= \!8\sigma \!\sqrt{\gamma_{2nT}K|V| T\log\left(2|V|\pi^2 T^2/3\delta\right)} \\
        &\quad \! +\! nD\K|V|\max_v(\mu(v) +\beta^{(1)} \sqrt{\kappa(v,v)})(\log T \!\!+\! 2) \!+\! 2|V|\!\sqrt{2nK\log\left(2|V|\pi^2 T^2/3\delta\right)\max_v\kappa(v,\!v)},
    \end{talign*}
    which completes the proof.
\end{proof}

\section{Proof of Lemma \ref{lemma:bound-Ke}}\label{apdx:bound-Ke}


Before proving the lemma, we first define some auxiliary variables. We define a new mean function $\wk: V\times V\to \mathbb{R}$ such that for $u,v \in V$
\begin{align*}
    \wk(u,v) &= 0, ~~\textup{if}~ u\neq v\\
    \wk(u,u) &= \sum_{v\in V}\wk(u,v).
\end{align*}

We also define $\wk^{(e)}$ as such that $\wk^{(e)}$ is the covariance matrix of the GP given by $ w~\big|~Y(s_{t_e}), Y(s_{t_e - 1}),\dots,Y(s_1)$, when assuming that the prior distribution of $w$ is sampled from $w\sim \GP(\mu, \wk)$.

Correspondingly, given a set (allows repetition) $s$ we define $\wK^{(e)}_{s}:= \wk^{(e)}(s,s), \wK_s = \wk(s,s)$. We also denote $\wK^{e}:= \wk^{(e)}(V,V), \wK := \wk(V,V)$

\begin{lemma}\label{lemma:wK-ge-K}
    \begin{equation*}
        K^{(e-1)} \preceq \wK^{(e-1)}
    \end{equation*}
\begin{proof}
    From a definition of $\wk$ we have that $\wK - K$ is a diagonally-dominant matrix, i.e. $\wK \succeq K$. Thus $\wK_{\{V,s_{1:t_e-1}\}}\succeq K_{\{V,s_{1:t_e-1}\}}$.
    From standard GP derivation, we get that $K^{(e-1)}, \wK^{(e-1)}$ are the Schur complements of  $\wK_{\{V,s_{1:t_e-1}\}}, K_{\{V,s_{1:t_e-1}\}}$, i.e.
    \begin{align*}
    \wK^{(e-1)} &= \wK - \wk(V, s_{1:t_e-1}) (\wK_{s_{1:t_e-1}}+\sigma^2 I)^{-1}\wk(s_{1:t_e-1},V)\\
        K^{(e-1)} &= K - \kappa(V, s_{1:t_e-1}) (K_{s_{1:t_e-1}}+\sigma^2 I)^{-1}\kappa(s_{1:t_e-1},V).
    \end{align*}
    Then $K^{(e-1)} \preceq \wK^{(e-1)}$ immediately holds true given Lemma \ref{lemma:schur-complement-ineq}.
\end{proof}
\end{lemma}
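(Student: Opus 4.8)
The plan is to reduce the Loewner inequality $K^{(e-1)}\preceq\wK^{(e-1)}$ between the two \emph{posterior} covariances to a Loewner inequality between the corresponding \emph{prior} (augmented) covariances, and then to transport that inequality through the GP-conditioning step, which is exactly a Schur-complement operation. The key structural observation is that $K^{(e-1)}$ and $\wK^{(e-1)}$ arise from conditioning on the \emph{same} sample locations $s_{1:t_e-1}$, so the only difference between them is the prior kernel ($\kappa$ versus $\wk$). If I can show that the prior kernel only ever becomes ``larger'' in the Loewner sense when passing from $\kappa$ to $\wk$, and that this ordering survives conditioning, I am done.

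First I would establish the base inequality on the whole vertex set, $K=\kappa(V,V)\preceq\wk(V,V)=\wK$. By the definition of $\wk$, the difference $\wK-K$ is symmetric with off-diagonal entries $-\kappa(u,v)$ for $u\ne v$ and diagonal entries $\sum_{v\ne u}\kappa(u,v)$. Assuming the kernel is entrywise nonnegative (as for the squared-exponential kernel used throughout), each diagonal entry equals the sum of the absolute values of the off-diagonal entries in its row, so $\wK-K$ is weakly diagonally dominant with nonnegative diagonal, hence positive semidefinite (e.g.\ by Gershgorin's theorem). This gives $\wK\succeq K$.

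Next I would lift this to the augmented matrices indexed by $\{V,s_{1:t_e-1}\}$, i.e.\ the prior covariances of the joint vector $(w(V),Y(s_{1:t_e-1}))$. Because $s_{1:t_e-1}$ is a multiset of vertices already contained in $V$, forming the augmented kernel matrix amounts to selecting (and possibly duplicating) rows and columns of the kernel on $V$ and adding the measurement noise only on the sample block; schematically $K_{\{V,s_{1:t_e-1}\}}=P\,\kappa(V,V)\,P^\top+\sigma^2\Pi$ for a $0/1$ selection matrix $P$ and a projector $\Pi$ onto the sample block, and likewise for $\wk$ with the \emph{same} $P$ and $\Pi$. The noise term $\sigma^2\Pi$ is identical in both augmented matrices and cancels in the difference, so $\wK_{\{V,s_{1:t_e-1}\}}-K_{\{V,s_{1:t_e-1}\}}=P(\wK-K)P^\top\succeq0$ by the previous step, whence $\wK_{\{V,s_{1:t_e-1}\}}\succeq K_{\{V,s_{1:t_e-1}\}}$.

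Finally I would invoke the Schur-complement structure of GP conditioning: as recorded in the standard GP update, $K^{(e-1)}$ and $\wK^{(e-1)}$ are precisely the Schur complements of $K_{\{V,s_{1:t_e-1}\}}$ and $\wK_{\{V,s_{1:t_e-1}\}}$ with respect to their sample (bottom-right) blocks, which are invertible thanks to the $\sigma^2 I$ regularization. Applying Lemma \ref{lemma:schur-complement-ineq}, the monotonicity of the Schur complement under the Loewner order, to the inequality from the previous step then yields $K^{(e-1)}\preceq\wK^{(e-1)}$. The main obstacle I anticipate is justifying this last monotonicity cleanly, since the two augmented matrices differ not only in their top-left block but also in their off-diagonal and corner blocks; the cleanest route is the variational characterization $x^\top(M/C)x=\min_y \begin{pmatrix}x\\ y\end{pmatrix}^\top M\begin{pmatrix}x\\ y\end{pmatrix}$ of the Schur complement $M/C$, which immediately implies that $M_1\preceq M_2$ forces $M_1/C_1\preceq M_2/C_2$, because taking a pointwise-smaller quadratic form and then minimizing over $y$ preserves the inequality.
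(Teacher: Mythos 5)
Your proposal is correct and follows the same three-step skeleton as the paper's proof: establish $\wK \succeq K$ via diagonal dominance, lift this to the noise-augmented joint covariance matrices, and transport it through GP conditioning using monotonicity of the Schur complement (Lemma \ref{lemma:schur-complement-ineq}). The genuine difference is how you justify that monotonicity. The paper proves Lemma \ref{lemma:schur-complement-ineq} by an explicit algebraic computation: simultaneously diagonalizing the bottom-right blocks, showing the residual term is $-(\wY_{12}\Lambda - Y_{12}\wLambda)M(\wY_{12}\Lambda - Y_{12}\wLambda)^\top \preceq 0$, and handling non-invertible $\wX_{22}-X_{22}$ by an $\epsilon$-perturbation. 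You instead invoke the variational characterization $x^\top (M/C)\,x = \min_y \left(\begin{smallmatrix} x \\ y \end{smallmatrix}\right)^\top M \left(\begin{smallmatrix} x \\ y \end{smallmatrix}\right)$, valid whenever the bottom-right block is positive definite (guaranteed here by the $\sigma^2 I$ regularization); this is shorter, sidesteps the invertibility issue entirely, and correctly addresses your own anticipated obstacle that the two augmented matrices differ in all blocks, not only the top-left one. Your writeup also fills in two points the paper leaves implicit: (i) the diagonal-dominance step requires the kernel to be entrywise nonnegative --- the paper's definition $\wk(u,u)=\sum_{v\in V}\wk(u,v)$ is, read literally, circular and presumably intends $\sum_{v\in V}\kappa(u,v)$, in which case your nonnegativity caveat (satisfied by the squared-exponential kernels used in the experiments) is exactly what is needed for $\wK-K\succeq 0$; and (ii) the lift to the augmented matrices works because the noise blocks are identical in both and cancel, leaving $\wK_{\{V,s_{1:t_e-1}\}}-K_{\{V,s_{1:t_e-1}\}}=P(\wK-K)P^\top\succeq 0$, a step the paper asserts without detail. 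In short: same decomposition, same key lemma, but your variational proof of the auxiliary lemma is more elementary and slightly more general than the paper's.
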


\begin{lemma}\label{lemma:bound-wKe}
If for all $v\in s_{t_{e+1}-1}$, $n_v^{(e-1)} \ge 1$, then
    \begin{align*}
        \lmax\left(\wK^{(e-1)}_{s_{t_{e}:t_{e+1}-1}}\right)\le \sigma^2
    \end{align*}
otherwise, there exists at least one $v\in s_{t_{e+1}-1}$, such that $n_v^{(e-1)} = 0$, then
\begin{align*}
   \lmax\left(\wK^{(e-1)}_{s_{t_{e}:t_{e+1}-2}}\right)\le \sigma^2
\end{align*}
\begin{proof}
    Since $\wK$ is a diagonal matrix, $\wK^{(e-1)}$ is also a diagonal matrix. For any element $u\in V$, using standard Bayesian law we get that 
    \begin{equation*}
        \wk^{(e-1)}(u,u) = \frac{1}{\sigma^{-2}n_u^{(e-1)} + \wk(u,u)^{-1}},
    \end{equation*}
    where $n_u^{(e)}, c_u^{(e)}$ are defined in the very beginning of the Appendix. Then by reorganizing the columns and rows of $\wK^{(e-1)}_{s_{t_{e}:t_{e+1}-1}}$ we have that
    \begin{align*}
        \lmax\left(\wK^{(e-1)}_{s_{t_{e}:t_{e+1}-1}}\right) &= \lmax\left(\blkdiag\{\wk^{(e-1)}(u,u) E_{c_u^{(e)}}\}\right) = \max_u \wk^{(e-1)}(u,u) c_u^{(e)}\\
        &=\max_u \frac{c_u^{(e)}}{\sigma^{-2}n_u^{(e-1)} + \wk(u,u)^{-1}},
    \end{align*}
   where the matrix $E_n$ is defined as an $n\times n$ dimensional all-one matrix, i.e. $E_n = \left[\begin{array}{ccc}
       1 & \cdots & 1 \\
       \vdots &  &\vdots\\
       1 &\cdots &1
   \end{array}\right]_{n\times n}$.
   
For the first case, i.e. for all $v\in s_{t_{e+1}-1}$, $n_v^{(e-1)} \ge 1$, from the doubling trick, we get that for any $n_u^{(e-1)}\ge 1$, $c_u^{(e)}\le n_u^{(e-1)}$. Then
    \begin{align*}
        \lmax\left(\wK^{(e-1)}_{s_{t_{e}:t_{e+1}-1}}\right) &= \max_{u\in s_{t_{e}:t_{e+1}-1}}\frac{c_u^{(e)}}{\sigma^{-2}n_u^{(e-1)} + \wk(u,u)^{-1}} \le \max_{u\in s_{t_{e}:t_{e+1}-1}} \frac{n_u^{(e-1)}}{\sigma^{-2}n_u^{(e-1)} + \wk(u,u)^{-1}}\\
        &\le \max_{u\in s_{t_{e}:t_{e+1}-1}} \frac{n_u^{(e-1)}}{\sigma^{-2}n_u^{(e-1)}} \le \sigma^2
    \end{align*}

Similar argument holds for the second case, where there exists at least one $v\in s_{t_{e+1}-1}$, such that $n_v^{(e-1)} = 0$. Using the same argument in the proof of Lemma \ref{lemma:log-det-to-trace} we have that in this case, for all $v\in s_{t_e:t_{e+1}-2}$, $n_v^{(e-1)} \ge 1$, then similarly
\begin{align*}
        \lmax\left(\wK^{(e-1)}_{s_{t_{e}:t_{e+1}-2}}\right) &= \max_{u\in s_{t_{e}:t_{e+1}-2}}\frac{c_u^{(e)}}{\sigma^{-2}n_u^{(e-1)} + \wk(u,u)^{-1}} \le \max_{u\in s_{t_{e}:t_{e+1}-2}} \frac{n_u^{(e-1)}}{\sigma^{-2}n_u^{(e-1)} + \wk(u,u)^{-1}}\\
        &\le \max_{u\in s_{t_{e}:t_{e+1}-2}} \frac{n_u^{(e-1)}}{\sigma^{-2}n_u^{(e-1)}} \le \sigma^2,
    \end{align*}
    which completes the proof.
\end{proof}
\end{lemma}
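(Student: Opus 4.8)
The plan is to exploit two structural facts: that $\wk$ is a \emph{diagonal} kernel, and that the doubling trick tightly controls how many fresh samples any vertex can accumulate within a single episode. First I would establish that, because $\wk(u,v)=0$ for $u\neq v$, the GP with prior covariance $\wk$ decouples into $|V|$ independent scalar Gaussian processes, one per vertex, so conditioning on the observed samples leaves the posterior covariance $\wk^{(e-1)}$ diagonal as well. For each vertex $u$ the posterior variance is then just the scalar conjugate-Gaussian update after observing $n_u^{(e-1)}$ noisy samples of variance $\sigma^2$, yielding the closed form
\begin{equation*}
\wk^{(e-1)}(u,u)=\frac{1}{\wk(u,u)^{-1}+\sigma^{-2}n_u^{(e-1)}}.
\end{equation*}

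Next I would compute the maximal eigenvalue using this diagonal structure. Since $s_{t_e:t_{e+1}-1}$ is a multiset that lists each covered vertex $u$ exactly $c_u^{(e)}$ times, reordering rows and columns to group the copies of each vertex turns $\wK^{(e-1)}_{s_{t_e:t_{e+1}-1}}$ into a block-diagonal matrix $\blkdiag\{\wk^{(e-1)}(u,u)\,E_{c_u^{(e)}}\}$, where $E_n$ denotes the $n\times n$ all-ones matrix (every cross-covariance between distinct copies of the same vertex equals that vertex's variance, while distinct vertices are uncorrelated). Because $\lmax(E_n)=n$, each block contributes eigenvalue $\wk^{(e-1)}(u,u)\,c_u^{(e)}$, so that $\lmax\big(\wK^{(e-1)}_{s_{t_e:t_{e+1}-1}}\big)=\max_u \wk^{(e-1)}(u,u)\,c_u^{(e)}=\max_u c_u^{(e)}\big/(\wk(u,u)^{-1}+\sigma^{-2}n_u^{(e-1)})$.

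The final and most delicate step is to bound $c_u^{(e)}$ via the doubling trick. The key observation is that $n_v$ is an indicator sum over the \emph{union} covering set, so it increases by at most one per time step; hence just before the terminating step no vertex has doubled, and for any $u$ with $n_u^{(e-1)}\ge 1$ the count is at most $2n_u^{(e-1)}$ by the end of the episode, i.e. $c_u^{(e)}\le n_u^{(e-1)}$. Substituting this and discarding the nonnegative term $\wk(u,u)^{-1}$ in the denominator gives $\wk^{(e-1)}(u,u)\,c_u^{(e)}\le n_u^{(e-1)}/(\sigma^{-2}n_u^{(e-1)})=\sigma^2$. In the first case every vertex of $s_{t_e:t_{e+1}-1}$ has $n_u^{(e-1)}\ge 1$, so the bound applies to all blocks simultaneously. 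In the second case a vertex with $n_u^{(e-1)}=0$ is present at the last step; since $\max\{2\cdot 0,1\}=1$, such a vertex triggers termination the instant it is first sampled, so it can only appear at step $t_{e+1}-1$, meaning every vertex in the truncated profile $s_{t_e:t_{e+1}-2}$ still satisfies $n_u^{(e-1)}\ge 1$ and the identical argument gives $\lmax\big(\wK^{(e-1)}_{s_{t_e:t_{e+1}-2}}\big)\le\sigma^2$. I expect the main obstacle to be precisely this combinatorial argument—pinning down that the per-step increment of $n_v$ is at most one and that a never-before-sampled vertex can only be the terminating sample—rather than the linear-algebraic computation, which is routine once the diagonal structure is exposed.
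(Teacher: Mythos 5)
Your proposal matches the paper's proof step for step: the same diagonal posterior-variance formula $\wk^{(e-1)}(u,u) = \big(\sigma^{-2}n_u^{(e-1)} + \wk(u,u)^{-1}\big)^{-1}$, the same block-diagonal reorganization into blocks $\wk^{(e-1)}(u,u)E_{c_u^{(e)}}$ with $\lmax(E_n)=n$, the same doubling-trick bound $c_u^{(e)}\le n_u^{(e-1)}$, and the same contradiction argument showing that a vertex with $n_v^{(e-1)}=0$ can only be sampled at the terminating step, which handles the truncated profile in the second case. If anything, you spell out the combinatorial details (the per-step increment of $n_v$ being at most one, and the precise termination logic) more explicitly than the paper does.
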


\begin{proof}[Proof of Lemma \ref{lemma:bound-Ke}]
    Lemma \ref{lemma:bound-Ke} is a direct corollary of Lemma \ref{lemma:wK-ge-K} and Lemma \ref{lemma:bound-wKe}. From \ref{lemma:wK-ge-K} we have that $\wK^{(e-1)}_{s_{t_{e}:t_{e+1}-1}} \succeq K^{(e-1)}_{s_{t_{e}:t_{e+1}-1}}$, $\wK^{(e-1)}_{s_{t_{e}:t_{e+1}-1}} \succeq K^{(e-1)}_{s_{t_{e}:t_{e+1}-1}}$. Thus ,
    \begin{align*}
      \lmax\left(K^{(e-1)}_{s_{t_{e}:t_{e+1}-1}}\right) \le  \lmax\left(\wK^{(e-1)}_{s_{t_{e}:t_{e+1}-1}}\right), \quad \lmax\left(K^{(e-1)}_{s_{t_{e}:t_{e+1}-2}}\right) \le  \lmax\left(\wK^{(e-1)}_{s_{t_{e}:t_{e+1}-2}}\right)
    \end{align*}
    Lemma \ref{lemma:bound-Ke} is the proved directly from Lemma \ref{lemma:bound-wKe}.
\end{proof}
\section{Auxiliaries}\label{apdx:auxiliaries}
\begin{lemma}[Bounding the Episode length]\label{lemma:bound-episode-number} For Algorighm \ref{alg:main}, we have that the episode length $E$ can be bounded by
    \begin{equation*}
        E \le |V|\log(t_{E+1}-1) + 1
    \end{equation*}
\begin{proof}
Recall that in Algorithm \ref{alg:main} (Line 6) the termination condition is given by the doubling trick, i.e., there exists $v\in V$ such that $n_v \ge \max{2n_v(t_e), 1}$. We call the above $v$ which triggers the termination condition in at episode $e$ as the `critical vertex' for episode $e$. Now, we count the number of times each vertex is the `critical vertex' for $e = 1,2,\dots, E$. According to the pigeonhole principle, there exists a $v^\star$ such that it is the critical vertex for more than $\frac{E}{|V|}$ times. 

Additionally, given the doubling trick, if vertex $v$ is the critical vertex for episode $e$, then $n_v^{(e)} = 2n_v^{(e-1)}$ if $n_v^{(e-1)} \ge 1$, $n_v^{(e)} = 1$ if $n_v^{(e-1)} = 0$. Thus for vertex $v^\star$, since it is the critical vertex for at least $\frac{E}{|V|}$ episodes, we have that
\begin{align*}
    n_{v^\star}^{(E)} \ge 2^{\frac{E}{|V|}-1}.
\end{align*}
Additionally, given that $n_{v^\star}^{(E)} \le t_{E+1}-1$, we have that
\begin{align*}
    2^{\frac{E}{|V|}-1} \le t_{E+1}-1 ~~\Longrightarrow~~E \le |V|\log(t_{E+1}-1) + 1
\end{align*}
\end{proof}
\end{lemma}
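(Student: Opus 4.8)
The plan is to exploit the doubling-trick termination rule directly: every episode ends precisely because some vertex's sample count has (at least) doubled, so no single vertex can trigger termination too many times without accumulating an exponentially large sample count — and that count is in turn capped by the total horizon $t_{E+1}-1$. The whole argument is a pigeonhole-plus-doubling count.

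First I would attach to each episode $e$ a \emph{critical vertex}: a vertex $v$ attaining $n_v(t)\ge\max\{2n_v(t_e-1),1\}$ at the termination time $t=t_{e+1}-1$. Line 6 of Algorithm~\ref{alg:main} guarantees at least one such vertex exists, so this assignment is well defined, and it sorts the $E$ episodes into at most $|V|$ groups according to which vertex served as critical. A pigeonhole step then forces some fixed vertex $v^\star$ to be the critical vertex for at least $E/|V|$ of the episodes.

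Second I would track how a vertex's count grows over the episodes in which it is critical. By the termination rule, each time $v$ is critical its count jumps from $n_v^{(e-1)}$ to at least $\max\{2n_v^{(e-1)},1\}$; starting from $n_v=0$, the sequence $0\to1\to2\to4\to\cdots$ shows that being critical $m$ times forces the count to be at least $2^{m-1}$. Applying this to $v^\star$ gives $n_{v^\star}^{(E)}\ge 2^{E/|V|-1}$. On the other hand, since $n_v(\cdot)$ increments by at most one per time step, every vertex satisfies $n_v^{(E)}\le t_{E+1}-1$. Chaining these, $2^{E/|V|-1}\le t_{E+1}-1$, and taking base-$2$ logarithms and solving for $E$ yields the claimed bound of order $|V|\log(t_{E+1}-1)$.

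I do not expect a genuine obstacle here; the only points requiring care are the bookkeeping in the second step — checking that ``critical $m$ times'' really forces a count of at least $2^{m-1}$ (in particular handling the initial $0\to1$ jump correctly rather than treating it as a doubling) and confirming that $n_v$ only ever increases by one per time step so the horizon cap $n_v^{(E)}\le t_{E+1}-1$ is legitimate. Once those are pinned down, the pigeonhole step and the final logarithmic solve are routine.
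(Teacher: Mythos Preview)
Your proposal is correct and follows essentially the same argument as the paper: assign a critical vertex to each episode, pigeonhole to find a vertex $v^\star$ that is critical at least $E/|V|$ times, observe that each critical occurrence at least doubles $n_{v^\star}$ (after the initial $0\to1$ jump) so $n_{v^\star}^{(E)}\ge 2^{E/|V|-1}$, and cap this by the horizon $t_{E+1}-1$. The care points you flagged (the $0\to1$ initialization and the per-step increment bound on $n_v$) are exactly the only places requiring attention, and both go through as you describe.
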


\begin{lemma}\label{lemma:schur-complement-ineq}
Suppose $X = \left[\begin{array}{cc}
    X_{11} & X_{12} \\
    X_{12}^\top & X_{22}
\end{array}\right]$ and $\wX = \left[\begin{array}{cc}
    \wX_{11} & \wX_{12} \\
    \wX_{12}^\top & \wX_{22}
\end{array}\right]$ are symmetric and positive definite matrices, and that $\wX \succeq X$, then the schur complements satisfy the following inequality
\begin{equation*}
    \wX_{11} - \wX_{12}\wX_{22}^{-1}\wX_{12}^\top\succeq X_{11} - X_{12}X_{22}^{-1}X_{12}^\top
\end{equation*}
\begin{proof}
    Let $\wDelta_{11}:= \wX_{11} - \wX_{12}\wX_{22}^{-1}\wX_{12}^\top, \Delta_{11}:= X_{11} - X_{12}X_{22}^{-1}X_{12}^\top$. Then
    \begin{align*}
       \wX = \left[\begin{array}{cc}
    \wDelta_{11} + \wX_{12}\wX_{22}^{-1}\wX_{12}^\top & \wX_{12} \\
    \wX_{12}^\top & \wX_{22}
\end{array}\right], X = \left[\begin{array}{cc}
    X_{11} + X_{12}X_{22}^{-1}X_{12}^\top & X_{12} \\
    X_{12}^\top & X_{22}
\end{array}\right]. 
    \end{align*}
    Thus
    \begin{align*}
        \wX - X = \left[\begin{array}{cc}
    (\wDelta_{11} - \Delta_{11}) + (\wX_{12}\wX_{22}^{-1}\wX_{12}^\top- X_{12}X_{22}^{-1}X_{12}^\top) & \wX_{12} - X_{12} \\
    (\wX_{12} - X_{12})^\top & \wX_{22} - X_{22}
\end{array}\right].
    \end{align*}

    We first consider the case where $\wX_{22} - X_{22}$ is invertible. Since $\wX - X\succeq 0$, its Schur complement should be positive semidefinite, i.e.
    \begin{align}
        (\wDelta_{11} - \Delta_{11})+ \underbrace{(\wX_{12}\wX_{22}^{-1}\wX_{12}^\top- X_{12}X_{22}^{-1}X_{12}^\top) - (\wX_{12} - X_{12})(\wX_{22} - X_{22})^{-1}(\wX_{12} - X_{12})^\top}_{\textup{Part A}}\succeq 0.\label{eq:schur-complement}
    \end{align}
    Thus to prove the statement it is sufficient to show that $\textup{Part A} \preceq 0$.

    From standard linear algebra result we know that there exists an invertible matrix $G$ that simultaneously diagonalizes $\wX_{22}$ and $X_{22}$, i.e.
    \begin{align*}
        \wX_{22} = G^\top \wLambda G,\quad X_{22} = G^\top \Lambda G,
    \end{align*}
    where $\wLambda, \Lambda$ are diagonal matrices. We define $\wY_{12}:=\wX_{12}G^{-1}, Y_{12}:=X_{12}G^{-1}$. Then
    \begin{align*}
        &\quad \textup{Part A} = \wY_{12}\wLambda^{-1}\wY_{12}^\top - Y_{12}\Lambda^{-1}Y_{12}^\top - (\wY_{12} - \wY_{12})(\wLambda - \Lambda)^{-1} (\wY_{12} - Y_{12})^\top\\
        &= \wY_{12}\left(\wLambda^{-1} - (\wLambda-\Lambda)^{-1}\right)\wY_{12}^\top - Y_{12}\left(\Lambda^{-1} + (\wLambda-\Lambda)^{-1}\right)Y_{12}^\top + \wY_{12}(\wLambda-\Lambda)^{-1}Y_{12}^\top + Y_{12}(\wLambda-\Lambda)^{-1}\wY_{12}^\top.
    \end{align*}
    Since diagonal matrices are interchageable, let $M:=(\wLambda-\Lambda)^{-1}\wLambda^{-1}\Lambda^{-1} $, then
    \begin{align*}
        \wLambda^{-1} - (\wLambda-\Lambda)^{-1}&= M(\wLambda-\Lambda)\Lambda - M\wLambda\Lambda = -\Lambda M \Lambda\\
        \Lambda^{-1} + (\wLambda-\Lambda)^{-1} &= M(\wLambda-\Lambda)\wLambda + M\wLambda\Lambda = \wLambda M\wLambda\\
        (\wLambda-\Lambda)^{-1} &= \Lambda M \wLambda = \wLambda M \Lambda.
    \end{align*}
    Thus
    \begin{align*}
        \textup{Part A} &= -\wY_{12}\Lambda M \Lambda \wY_{12}^\top - Y_{12}\wLambda M\wLambda Y_{12}^\top + \wY_{12}\Lambda M \wLambda Y_{12} + Y_{12}\wLambda M\Lambda Y_{12}^\top\\
        &= - (\wY_{12}\Lambda-Y_{12}\wLambda) M (\wY_{12}\Lambda-Y_{12}\wLambda)^\top \preceq 0 .
    \end{align*}
    From \eqref{eq:schur-complement}, $\wDelta_{11} - \Delta_{11}\succeq -\textup{Part A} \succeq 0$, which completes the proof for the case where $\wX_{22} - X_{22}$ is invertible. For the case when  $\wX_{22} - X_{22}$ is not invertible, we consider a sequence of matrices $\wX^\epsilon$
    \begin{align*}
       \wX^\epsilon  = \left[\begin{array}{cc}
    \wX_{11} & \wX_{12} \\
    \wX_{12}^\top & \wX_{22} + \epsilon I
\end{array}\right].
    \end{align*}
    Since $\wX_{22} - X_22\succeq 0$, we have that $\wX_{22} + \epsilon I - X_{22}$ is invertible, thus
    \begin{equation*}
    \wX_{11} - \wX_{12}\left(\wX_{22} + \epsilon I\right)^{-1}\wX_{12}^\top\succeq X_{11} - X_{12}X_{22}^{-1}X_{12}^\top.
\end{equation*}
Since $\wX_{22}\succ 0$,  $$ \lim_{\epsilon\to 0}\wX_{11} - \wX_{12}\left(\wX_{22} + \epsilon I\right)^{-1}\wX_{12}^\top =  \wX_{11} - \wX_{12}\wX_{22}^{-1}\wX_{12}^\top,$$
thus letting $\epsilon\to 0$ completes the proof.
\end{proof}
\end{lemma}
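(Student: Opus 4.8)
The plan is to sidestep the direct matrix algebra entirely and instead exploit a variational (energy-minimization) characterization of the Schur complement, which renders the monotonicity almost immediate. The key observation is that for any symmetric positive definite block matrix of the given form, the Schur complement of $X_{22}$ satisfies
\[
v^\top\left(X_{11} - X_{12}X_{22}^{-1}X_{12}^\top\right)v = \min_w \begin{bmatrix} v \\ w \end{bmatrix}^\top X \begin{bmatrix} v \\ w \end{bmatrix}
\]
for every vector $v$. First I would establish this identity by a short computation: the inner quadratic form $v^\top X_{11} v + 2 v^\top X_{12} w + w^\top X_{22} w$ is strictly convex in $w$ since $X_{22}\succ 0$, so it has the unique minimizer $w^\star = -X_{22}^{-1}X_{12}^\top v$, and substituting $w^\star$ back collapses the expression to exactly $v^\top(X_{11} - X_{12}X_{22}^{-1}X_{12}^\top)v$.

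With this characterization in hand, the monotonicity follows from the elementary fact that pointwise domination of functions is preserved under taking infima. Because $\wX\succeq X$, for every fixed pair $(v,w)$ we have $\begin{bmatrix} v \\ w \end{bmatrix}^\top \wX \begin{bmatrix} v \\ w \end{bmatrix} \geq \begin{bmatrix} v \\ w \end{bmatrix}^\top X \begin{bmatrix} v \\ w \end{bmatrix}$. Minimizing over $w$ on both sides, and using that $\min_w f(w)\geq \min_w g(w)$ whenever $f\geq g$ pointwise, yields
\[
v^\top\left(\wX_{11} - \wX_{12}\wX_{22}^{-1}\wX_{12}^\top\right)v \geq v^\top\left(X_{11} - X_{12}X_{22}^{-1}X_{12}^\top\right)v
\]
for every $v$, which is precisely the claimed Loewner inequality between the two Schur complements.

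The only point requiring verification is that both $X_{22}$ and $\wX_{22}$ are invertible, which holds automatically: each is a principal submatrix of a positive definite matrix and is therefore itself positive definite, so the Schur complements and the minimizers $w^\star$ are well defined. A pleasant consequence of this route is that the argument never needs $\wX_{22}-X_{22}$ to be invertible, so the separate limiting argument for the degenerate case can be dropped entirely. I do not expect a genuine obstacle here; the entire conceptual content is concentrated in recognizing the variational formula, after which the proof reduces to a two-line monotonicity argument.
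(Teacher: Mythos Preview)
Your proposal is correct and takes a genuinely different route from the paper. The paper's argument is purely matrix-algebraic: it writes out $\wX-X$ in block form, takes its Schur complement with respect to the $(2,2)$ block (which forces the auxiliary assumption that $\wX_{22}-X_{22}$ is invertible), and then reduces the claim to showing a certain matrix expression (``Part~A'') is negative semidefinite, which it verifies by simultaneously diagonalizing $\wX_{22}$ and $X_{22}$ and doing an explicit quadratic-form factorization; the degenerate case $\wX_{22}-X_{22}$ singular is then handled by an $\epsilon$-perturbation and a limit. Your variational characterization $v^\top(X/X_{22})v=\min_w [v;w]^\top X[v;w]$ bypasses all of this: monotonicity of the Loewner order is inherited directly through the infimum, no simultaneous diagonalization or case split is needed, and the only invertibility used is that of $X_{22}$ and $\wX_{22}$ themselves, which comes for free from positive definiteness. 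Your argument is shorter, conceptually cleaner, and strictly more robust; the paper's approach, by contrast, is self-contained at the level of explicit matrix identities and does not require knowing the energy-minimization interpretation of the Schur complement in advance.
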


\begin{lemma}\label{lemma:log-det-principal-submatrix}
Suppose $X\in \bR^{n\times n}$ is a positive-semi-definite matrix, and $X'\in \bR^{n'\times n'}$ where $n' < n$ is a principal submatrix of X. Then
\begin{equation*}
    \log(\det(I_{n'} + X')) \le \log(\det(I_{n}) + X).
\end{equation*}
\begin{proof}
    Assume that the eigenvalues of $X$ are
    \begin{align*}
        \lambda_1 \ge \lambda_2\ge \cdots\ge \lambda_n\ge 0,
    \end{align*}
    and the eigenvalues of $X'$ are
    \begin{align*}
        \lambda_1'  \ge \lambda_2'\ge \cdots\ge \lambda_{n'}'\ge 0.
    \end{align*}
    From Cauchy interlacing Theorem we have that
    \begin{align*}
        \lambda_1 \ge \lambda_1',~~ \lambda_2 \ge \lambda_2', ~~\cdots, ~~\lambda_{n'} \ge \lambda_{n'}'. 
    \end{align*}
    Thus
    \begin{align*}
        \log(\det(I_{n'} + X')) &= \sum_{i=1}^{n'}\log(1+\lambda_i') \le \sum_{i=1}^{n'}\log(1+\lambda_i)\\
        &\le \sum_{i=1}^{n}\log(1+\lambda_i) = \log(\det(I_n + X)),
    \end{align*}
    which completes the proof.
\end{proof}
\end{lemma}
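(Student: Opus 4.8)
The plan is to reduce the stated determinant inequality to an eigenvalue comparison and then invoke the Cauchy interlacing theorem. First I would list the eigenvalues of both matrices: let $\lambda_1 \ge \lambda_2 \ge \cdots \ge \lambda_n \ge 0$ be the eigenvalues of $X$ and $\lambda_1' \ge \cdots \ge \lambda_{n'}' \ge 0$ those of $X'$, where nonnegativity follows from positive-semidefiniteness (which is inherited by principal submatrices, since $X' = P^\top X P$ for a coordinate-selection matrix $P$). Using the standard identity that the determinant is the product of eigenvalues, both sides of the claim become sums of the form $\sum_i \log(1+\cdot)$: concretely, $\log\det(I_{n'}+X') = \sum_{i=1}^{n'}\log(1+\lambda_i')$ and $\log\det(I_n+X) = \sum_{i=1}^{n}\log(1+\lambda_i)$.

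The key step is the eigenvalue interlacing. Since $X'$ is a principal submatrix of $X$ obtained by deleting $n-n'$ rows together with the matching columns, the Cauchy interlacing theorem yields $\lambda_i \ge \lambda_i'$ for every $i = 1, \ldots, n'$. Because $\log(1+\cdot)$ is monotonically increasing, this gives $\sum_{i=1}^{n'}\log(1+\lambda_i') \le \sum_{i=1}^{n'}\log(1+\lambda_i)$ term by term.

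Finally I would dispose of the remaining $n-n'$ eigenvalues. Since $X \succeq 0$ each $\lambda_i \ge 0$, so each extra term satisfies $\log(1+\lambda_i) \ge 0$, and extending the summation from $n'$ up to $n$ can only increase the total: $\sum_{i=1}^{n'}\log(1+\lambda_i) \le \sum_{i=1}^{n}\log(1+\lambda_i)$. Chaining the two inequalities proves the claim. The argument has no serious obstacle: the only nonelementary ingredient is the Cauchy interlacing theorem, and the single subtlety worth flagging is that positive-semidefiniteness is used precisely to guarantee the dropped terms are nonnegative, so that discarding them preserves the direction of the inequality.
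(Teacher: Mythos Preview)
Your proposal is correct and follows essentially the same approach as the paper: list the eigenvalues, invoke the Cauchy interlacing theorem to get $\lambda_i \ge \lambda_i'$ for $i \le n'$, use monotonicity of $\log(1+\cdot)$ termwise, and then use nonnegativity of the remaining eigenvalues to extend the sum from $n'$ to $n$.
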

\section{Detailed Experiment Settings and Additional Numerical Results}
\subsection{Complete Experimental Results}

The results of (i) observation noise variance $0.1,0.001$; (ii) another kernel length scale at $0.5$; (iii) more results on more agents are shown in Figure \ref{fig:apdx-numerical-main1}, \ref{fig:apdx-numerical-main2}, \ref{fig:apdx-numerical-main3}. The results have shown that the proposed MAC-DT algorithm stably achieved no-regret maximal coverage and outperformed two baselines.
\begin{figure}[ht]
    \centering
    \includegraphics[width=0.24\linewidth]{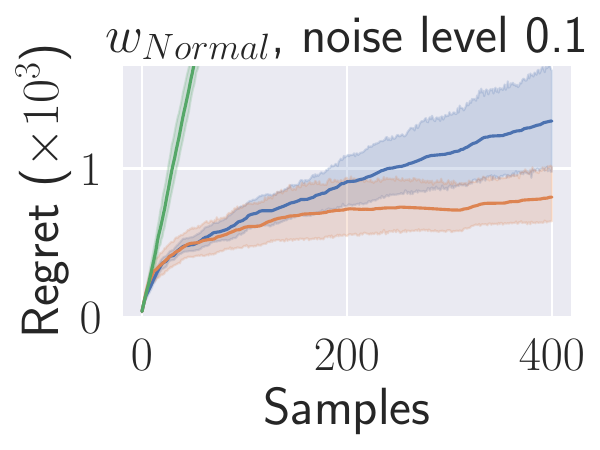}
    \includegraphics[width=0.24\linewidth]{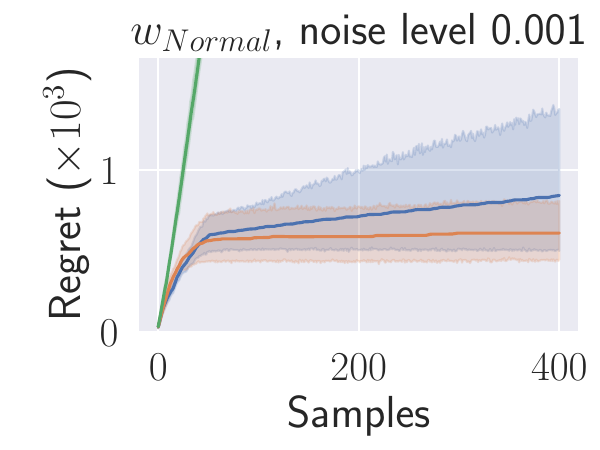}
    \includegraphics[width=0.24\linewidth]{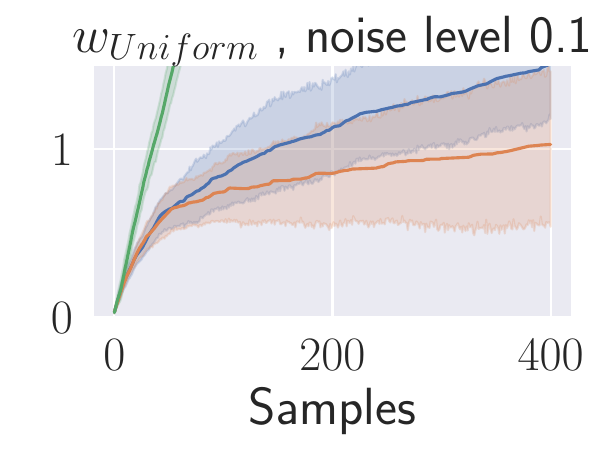}\\
    \includegraphics[width=0.24\linewidth]{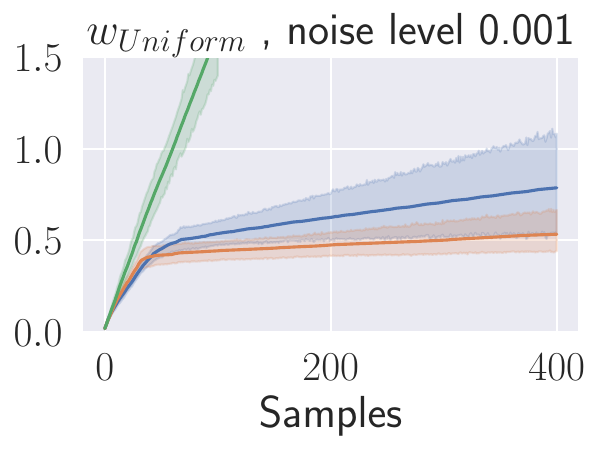}
    \includegraphics[width=0.24\linewidth]{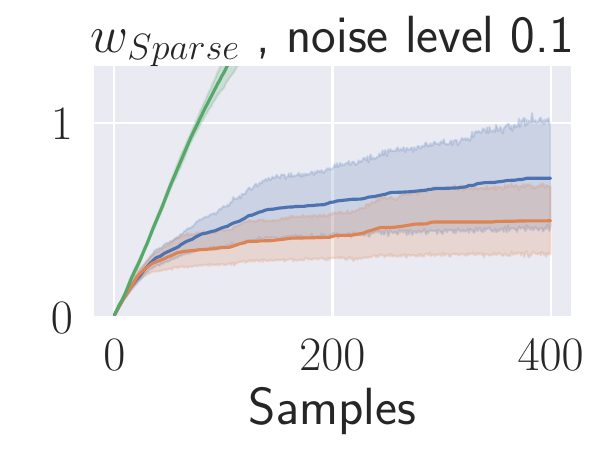}
    \includegraphics[width=0.24\linewidth]{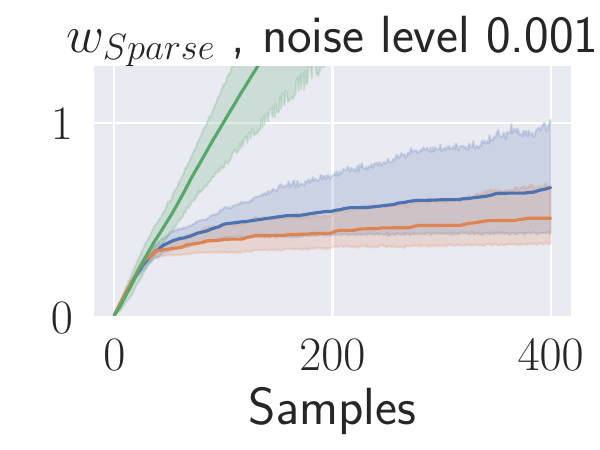}

    \includegraphics[width=0.6\linewidth]{exp_figures/legend.pdf}
    \caption{Complete results of kernel length scale 0.5 under different reward maps and observation noise levels. A kernel length scale of 0.5 means the GP assumes that neighborhood rewards are correlated.}
    \label{fig:apdx-numerical-main1}
\end{figure}
\begin{figure}[ht]
    \centering
    \includegraphics[width=0.24\linewidth]{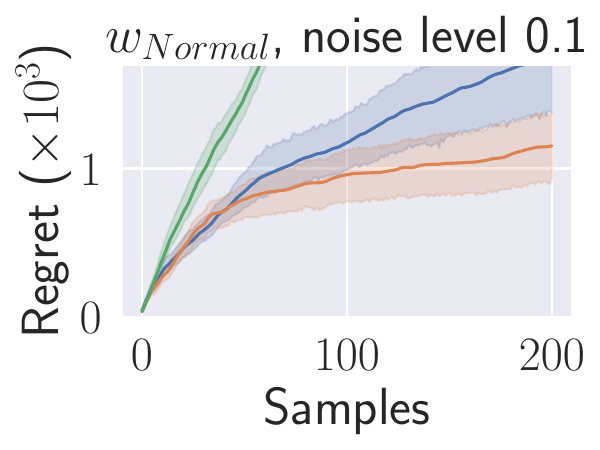}
    \includegraphics[width=0.24\linewidth]{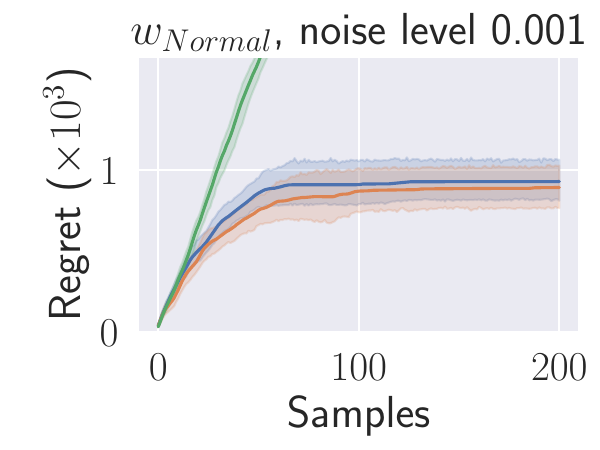}
    \includegraphics[width=0.24\linewidth]{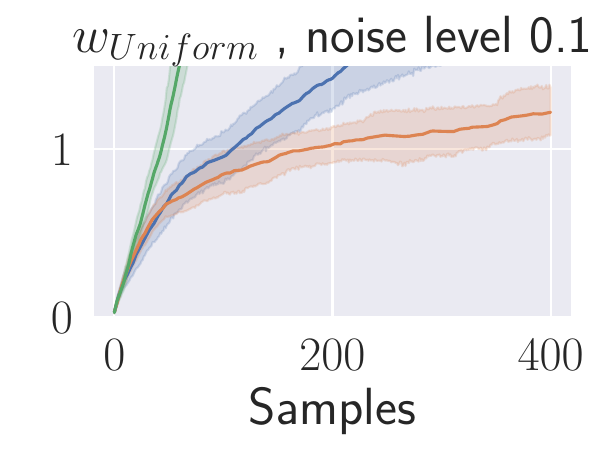}\\
    \includegraphics[width=0.24\linewidth]{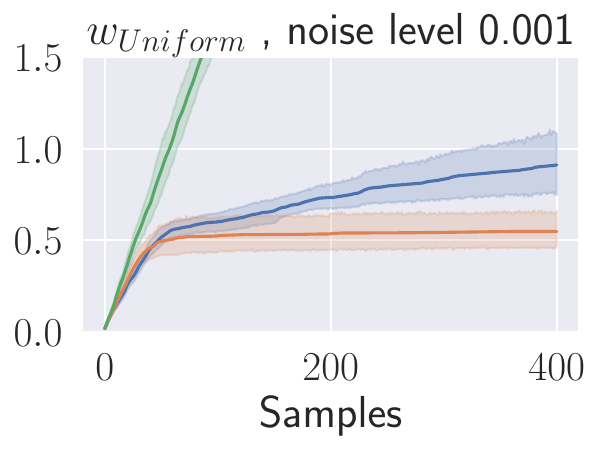}
    \includegraphics[width=0.24\linewidth]{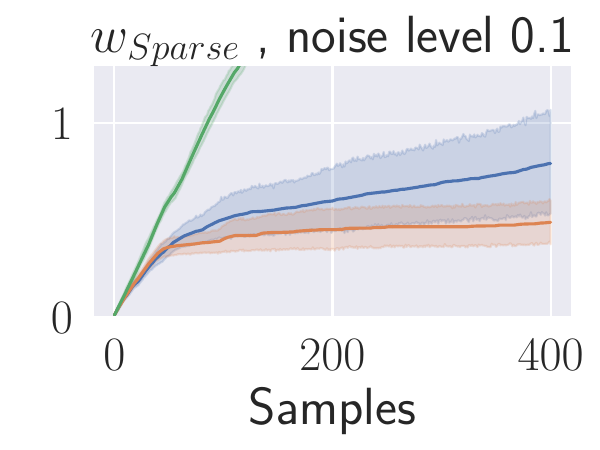}
    \includegraphics[width=0.24\linewidth]{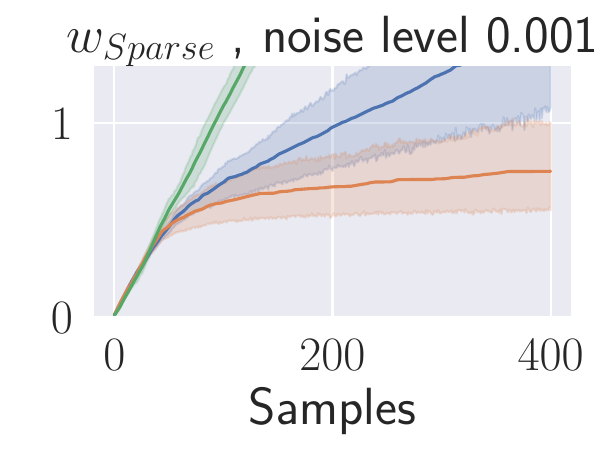}
    \includegraphics[width=0.6\linewidth]{exp_figures/legend.pdf}
    \caption{Complete results of kernel length scale $0.01$ under different reward maps and observation noise levels. A kernel length scale of 0.01 means the GP assumes that neighborhood rewards are approximately independent.}
    \label{fig:apdx-numerical-main2}
\end{figure}
\begin{figure}[ht]
    \centering
    \includegraphics[width=0.24\linewidth]{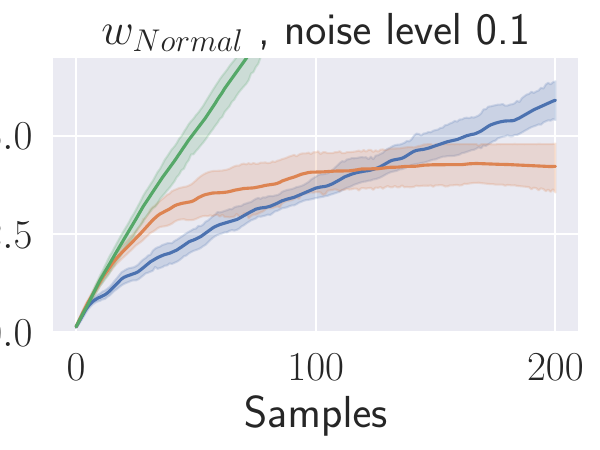}
    \includegraphics[width=0.24\linewidth]{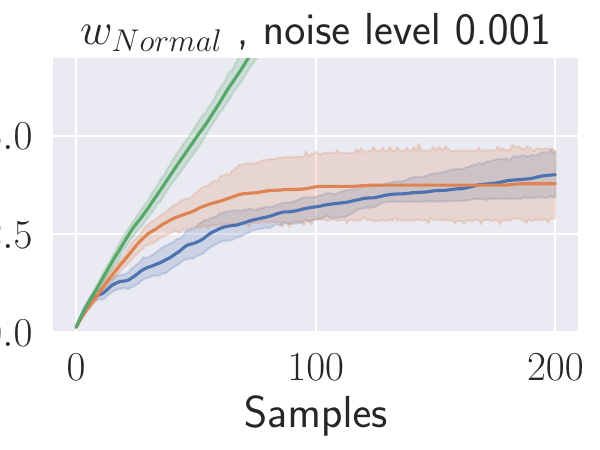}
    \includegraphics[width=0.24\linewidth]{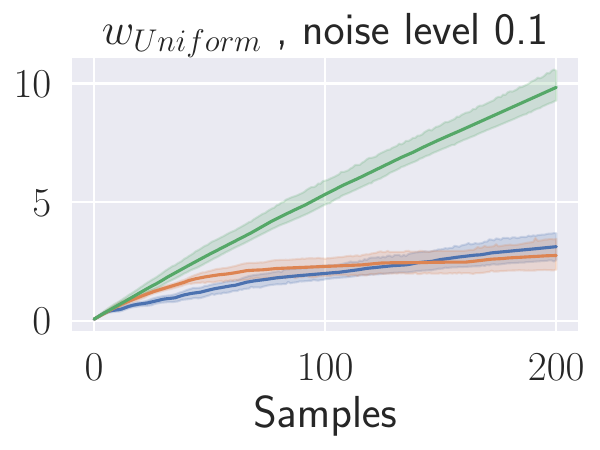}
    \includegraphics[width=0.24\linewidth]{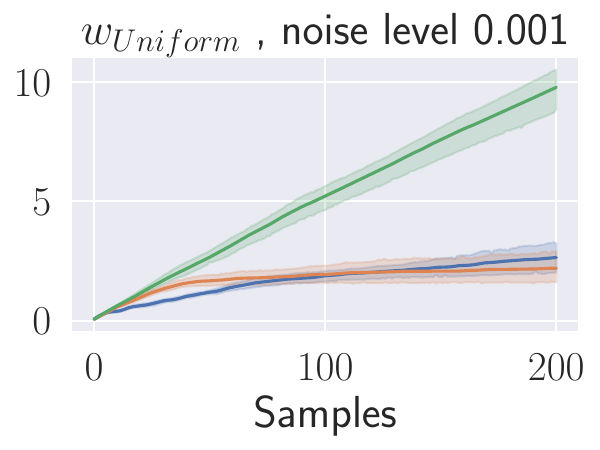}
    \includegraphics[width=0.6\linewidth]{exp_figures/legend.pdf}
    \caption{Complete results of 10 agents under different reward maps and observation noise levels.}
    \label{fig:apdx-numerical-main3}
\end{figure}
\subsection{Additional Experimental Details}
\label{sec:apdx_exp_texts}

\subsubsection{Voronoi Partition Coverage and Modifications}
The Voronoi partition algorithm is trying to find the best $N$ non-overlapped partition of a graph with node rewards, where the performance is defined by the coverage cost

\begin{equation}
    \mathcal{C}(\boldsymbol{\eta}, P)=\sum_{i=1}^N \sum_{v^{\prime} \in P_i} d_{G\left[P_i\right]}\left(\eta_i, v^{\prime}\right) w\left(v^{\prime}\right)\label{eq:voronoi}
\end{equation}
where $\boldsymbol{\eta}$ a list of $N$ robot configurations (robot positions in our case), $P_i\in V$ is is a subset of nodes indicating the Voronoi partition of $i^{th}$ robot, and $G[P_i]$ is a subgraph of $G$ that only includes nodes in $P_i$ and edges in $G$ which connects both nodes in $P_i$. $d$

The classic Lloyd algorithm iteratively places the robot to the centroid of the current Voronoi partition and computes the new Voronoi partition using the updated configuration. \citet{wei2021multi} integrate the Gaussian process to learn the node rewards. As we only consider coverage within the sensory area defined by the $k$-hop neighborhood, we define the distance function in Eq. \eqref{eq:voronoi} to be 
\begin{equation}
    d_{G[P_i]}(\eta_i, v)=\left\{\begin{matrix}
        1  & \text{if } v \text{ is in the k-hop neighborhood of } \eta_i
        \\0 & \text{otherwise}
    \end{matrix}\right.\label{eq:distance}
\end{equation}
. Therefore, the objective function in \eqref{eq:voronoi} defines the same multi-agent coverage problem as ours. Note that the exploration phase added in \citet{wei2021multi} is not very clear and does not consider the transient phase. To modify the exploration phase, we sequentially generate the target position for maximizing the uncertainty. At episode $e$, the target points for agent $i$ is selected by
\begin{equation}
    v_{i,dest} = \argmax_{v\in V}\hat \sigma_{i-1}^{(e)}(v)\label{eq:voronoi_sigma}
\end{equation}
where $\hat \sigma_{i-1}(v)  =\kappa(v,v) - \kappa(v, s) (\kappa(s,s) + \sigma^2 I)^{-1} \kappa(s,v)$, and $s$ include all query points in previous episodes and $v_{1, dest},v_{2, dest},\dots, v_{i-1, dest}$. The reason for the definition is that the update rules of covariance $\kappa'(u,v)$ in \eqref{eq:GP-update} are only related to the planned query points and not related to the observations. This is usually called imaginary queries, which is commonly used in multi-agent/batch Bayesian optimization. More discussion of the imaginary queries could be found in \citet{williams2006gaussian}. After we select one target, we can get the updated covariance and select the next target point for the next agents with maximum uncertainty. Therefore, we have our baseline algorithm in Algorithm \ref{algo:voronoi},

\begin{algorithm}[h]
    \caption{Multi-Agent Coverage by Voronoi Partition} \label{algo:voronoi}

  \begin{algorithmic}[1]
    \REQUIRE{GP model of the node rewards$ $}
    \FOR{episode $e = 1,2,...$}
    
    \STATE \texttt{\# Exploration phase}
    \STATE {Target selection: The robot team select target by $v_{i,dest}=\argmax_{v\in V}\sigma_i^{(e)}(v)$ according to \eqref{eq:voronoi_sigma}}.
    \STATE {Path planning and Collect samples: Agents plan paths to the targets following their planned paths. At time step $t$, each agent $i$ sets the evaluation point as $v_{i,t}^\eval = \argmax_{v\in s_{i,t}} \sigma^{(e-1)}(v)$, and collect sample $Y(v_{i,t}^\eval)$.} \label{algline:plan_and_collect_samples}
    \STATE Update the mean value $\bar\mu^{(e)}$ and $\bar\kappa^{(e)}$ covariance of GP according to \eqref{eq:GP-update}.
    \STATE \texttt{\# Coverage phase}
    \STATE Calculate the Voronoi partition according to \eqref{eq:voronoi} using the distance definition in \eqref{eq:distance} and posterior mean reward $\bar\mu^{(e)}$. Plan the path to the Voronoi partition destinations and collect samples like line \ref{algline:plan_and_collect_samples}.
    \STATE Given the current Voronoi partitions, calculate the center of mass, then plan the path to the center of mass and collect samples like line \ref{algline:plan_and_collect_samples}.
    \STATE Update the mean value $\mu^{(e)}$ and $\kappa^{(e)}$ covariance of GP for objective function according to \eqref{eq:GP-update}.
    \ENDFOR
  \end{algorithmic} 
    
\end{algorithm}

\textbf{Notations.} $\bar\mu^{(e)}$ and $\bar\kappa^{(e)}$ means intermediate posterior mean and covariance after the exploration phase at episode $(e)$.

        

\subsection{Safety Considerations}
\subsubsection{\texttt{Mac-DT-SafelyExplore} Algorithm details}
\label{apdx:safety_algo}
The \texttt{Mac-DT-SafelyExplore} algorithm is shown in Algorithm \ref{alg:safe}. Because we need to learn the safety function by maintaining another GP, we need to get observations from the environments 
\begin{equation}
    Y_g(v_{i,t}) = \{v\in s_{i,t}\mid g(v)+\epsilon\}\label{eq:safe_observation}
\end{equation}
. The observation model means we can collect observations of the safety function at every point of the coverage area. The setting is realistic since in real robotics applications, the safety objective is usually collision avoidance, and robots usually use sensors like cameras or lidars to measure the distance. These sensors can easily measure the safety objective function for a large region rather than a single point. We also maintain another GP $\GP_g(\mu_g^{(e)}, \kappa_g^{(e)})$ using data $D_g$, where $D_g$ includes all the history query points and noisy observations of the safety objective function (we will not define notations for data in $D_g$ for simplicity).

For the path planning in Line \ref{algline: pathplanningsafe} of Algorithm \ref{alg:safe}, there might be questions about how to guarantee that there always exists a solution to the path planning within the safe set. First, in \citet{prajapat2022near}, the planning guarantees that the targets always lies in the $\hat V^{(e)}_{\text{safe}}$. Then the safe expansion algorithm in \citet{prajapat2022near} guarantees that the $V^{(e-1)}_{\text{safe}}\subseteq V^{(e)}_{\text{safe}}$, which means the estimated safe set keep expanding. Therefore, the initial position $v_{i,t}$ must be in the estimated safe set. Therefore, both the start point and end point lie in the estimated safe set. We also assume that the safe set is connected \footnote{In numerical experiments, we provide prior knowledge about the safe set to guarantee the connectivity.}. Therefore, we can always find a path from the starting point to the goal.

\begin{algorithm}
    \caption{Mult-agent safe exploration with doubling trick (Mac-DT-SafelyExplore)} \label{alg:safe}
    \begin{algorithmic}[1]
        \FOR{episode $e = 1,2,...$}
        \STATE Given two GPs of objective function and safety function, set destinations according to the SafeMaC algorithm in \citet{prajapat2022near}. All hyperparameters follow the SafeMaC algorithm.
        \STATE Path Planning: Update the estimated safe region $\hat V^{(e)}_{\text{safe}}$. Compute the 
        weighted shortest path on the safe subgraph of $G$ from current location $v_{i,t_{e}}$ to $v_{i,dest}$. Safe subgraph of $G$ includes only nodes in $\hat V^{(e)}_{\text{safe}}$. Each edge entering node $v$ is weighted by the mean value of safety objective function $\mu_g(v)$. \label{algline: pathplanningsafe}
        \STATE Collect samples: Agents follow their planned paths and at time step $t$, each agent $i$ collects objective function sample $Y(v_{i,t}^\eval)$ by setting evaluation points $v_{i,t}^\eval = \argmax_{v\in s_{i,t}} \sigma^{(e-1)}(v)$ and safety function samples $Y_g(v_{i,t}^\eval)$ by \eqref{eq:safe_observation}. When an agent reaches its destination, it stays at the destination and keeps collecting samples until the episode terminates.
        \STATE Episode termination criteria \!-\! the doubling trick: episode terminates at a minimum $t$ such that there exists $v\in V$, such that $n_v(t) \ge \max\{2n_v(t_e-1),1\}$, i.e. when at least one of the vertex's objective function samples doubles.
        \STATE Update the mean value and covariance of both GPs for objective function and safety function according to \eqref{eq:GP-update}`.
    \ENDFOR 
    \end{algorithmic}
\end{algorithm}
\subsubsection{Additional analysis of the experimental results}
\label{apdx:safety_exp}
\texttt{Mac-DT-Safe} slightly outperforms \texttt{SafeMaC-SP} but not too much. The difference between these two are the termination criteria, \texttt{Mac-DT-Safe} has the doubling trick and \texttt{SafeMaC-SP} has the naive destination reached. h means adding the doubling trick slightly helps the performance but not too much. The reason is that regret is affected by both coverage optimality and safe exploration, whereas the doubling trick only improves optimality. The \texttt{SafePlanMac} achieves zero-regret coverage fast, showing that we can add simple rules to encourage safe exploration in the path planning to improve the performance. This shows the importance of considering the path planning problem during the transient phase.
\bibliography{bib}

\end{document}